\documentclass[12pt, reqno]{jams-l}

\usepackage{amsmath,enumerate,amsfonts,amssymb,color,graphicx,amsthm}

\usepackage[colorlinks=true, allcolors=black]{hyperref}
\usepackage{todonotes}
\usepackage[normalem]{ulem}
\numberwithin{equation}{section}
\usepackage{cite}
\usepackage{mathtools}
\usepackage[toc,page]{appendix}
\usepackage[margin=3cm]{geometry}

\newlength{\leftstackrelawd}
\newlength{\leftstackrelbwd}
\def\leftstackrel#1#2{\settowidth{\leftstackrelawd}%
	{${{}^{#1}}$}\settowidth{\leftstackrelbwd}{$#2$}%
	\addtolength{\leftstackrelawd}{-\leftstackrelbwd}%
	\leavevmode\ifthenelse{\lengthtest{\leftstackrelawd>0pt}}%
	{\kern-.5\leftstackrelawd}{}\mathrel{\mathop{#2}\limits^{#1}}}

\theoremstyle{plain}
\newtheorem{thm}{Theorem}[section]
\newtheorem{lem}[thm]{Lemma}
\newtheorem{cor}[thm]{Corollary}
\newtheorem{prop}[thm]{Proposition}
\newtheorem*{thm*}{Theorem}

\theoremstyle{definition}

\newtheorem{rmk}[thm]{Remark}
\newtheorem{?}[thm]{Problem}

\renewcommand{\phi}{\varphi}
\renewcommand{\epsilon}{\varepsilon}
\makeatletter
\def\@cite#1#2{[\textbf{#1\if@tempswa , #2\fi}]}
\def\@biblabel#1{[\textbf{#1}]}
\makeatother

\makeatletter
\newcommand*{\defeq}{\mathrel{\rlap{%
			\raisebox{0.3ex}{$\m@th\cdot$}}%
		\raisebox{-0.3ex}{$\m@th\cdot$}}%
	=}
\makeatother

\makeatletter
\newcommand*{\eqdef}{=\mathrel{\rlap{%
			\raisebox{0.3ex}{$\m@th\cdot$}}%
		\raisebox{-0.3ex}{$\m@th\cdot$}}%
}
\makeatother

\newcounter{marnote}

\makeatletter
\def\underbracex#1#2{\mathop{\vtop{\m@th\ialign{##\crcr
				$\hfil\displaystyle{#2}\hfil$\crcr
				\noalign{\kern3\p@\nointerlineskip}%
				#1\crcr\noalign{\kern3\p@}}}}\limits}

\def\upbracefilla{$\m@th \setbox\z@\hbox{$\braceld$}%
	\bracelu\leaders\vrule \@height\ht\z@ \@depth\z@\hfill 
	\kern\p@\vrule \@width\p@\kern\p@\vrule \@width\p@\kern\p@\vrule \@width\p@
	$}

\def\upbracefillb{$\m@th \setbox\z@\hbox{$\braceld$}%
	\vrule \@width\p@\kern\p@\vrule \@width\p@\kern\p@\vrule \@width\p@\kern\p@
	\leaders\vrule \@height\ht\z@ \@depth\z@\hfill\bracerd
	\braceld\leaders\vrule \@height\ht\z@ \@depth\z@\hfill
	\kern\p@\vrule \@width\p@\kern\p@\vrule \@width\p@\kern\p@\vrule \@width\p@
	$}

\def\upbracefillc{$\m@th \setbox\z@\hbox{$\braceld$}%
	\vrule \@width\p@\kern\p@\vrule \@width\p@\kern\p@\vrule \@width\p@\kern\p@
	\leaders\vrule \@height\ht\z@ \@depth\z@\hfill
	\kern\p@\vrule \@width\p@\kern\p@\vrule \@width\p@\kern\p@\vrule \@width\p@
	$}

\def\upbracefilld{$\m@th \setbox\z@\hbox{$\braceld$}%
	\vrule \@width\p@\kern\p@\vrule \@width\p@\kern\p@\vrule \@width\p@\kern\p@
	\leaders\vrule \@height\ht\z@ \@depth\z@\hfill\braceru$}

\def\upbracefillbd{$\m@th \setbox\z@\hbox{$\braceld$}%
	\vrule \@width\p@\kern\p@\vrule \@width\p@\kern\p@\vrule \@width\p@\kern\p@
	\bracerd\braceld
	\leaders\vrule \@height\ht\z@ \@depth\z@\hfill\braceru$}

\makeatother

\setcounter{tocdepth}{3}
\makeatletter
\def\l@subsection{\@tocline{2}{0pt}{2.5pc}{5pc}{}}
\makeatother

\begin{document}

	\title[Interior second derivative estimates for augmented Hessian equations]{Augmented Hessian equations on Riemannian manifolds: from integral to pointwise local \linebreak second derivative estimates} 
	
	\author{Jonah A. J. Duncan}
	\address{Johns Hopkins University, 404 Krieger Hall, Department of Mathematics, 3400 N. Charles Street, Baltimore, MD 21218, US.}
	\curraddr{}
	\email{jdunca33@jhu.edu}
	\thanks{}

	\date{}

	\vspace*{-1mm}\maketitle

	\vspace*{-6mm}\begin{abstract}
	We obtain \textit{a priori} local pointwise second derivative estimates for \linebreak solutions $u$ to a class of augmented Hessian equations on Riemannian manifolds, in terms of the $C^1$ norm and certain $W^{2,p}$ norms of $u$. We consider the case that no structural assumptions are imposed on either the augmenting term or the right hand side of the equation, and the case where these terms are convex in the gradient \linebreak variable. In the latter case, under an additional ellipticity condition we prove that the dependence on any $W^{2,p}$ norm can be dropped. Our results are derived using integral estimates.

		
	\end{abstract}

\section{Introduction}

In this paper, we obtain local pointwise second derivative estimates on solutions $u$ to a general class of fully nonlinear, non-uniformly elliptic equations in terms of the $C^1$ norm and certain $W^{2,p}$ norms of $u$. More precisely, for a domain $\Omega$ contained inside a smooth Riemannian manifold $(M^n,g_0)$ of dimension $n\geq 2$, we consider equations of the form
\begin{equation}\label{1}
f\big(\lambda\big(\nabla_{g_0}^2 u - A[u]\big)\big) = B[u], \quad \lambda\big(\nabla_{g_0}^2 u - A[u]\big)\in\Gamma \quad \text{ on }\Omega,
\end{equation}
where the pair $(f,\Gamma)$ is assumed to satisfy the following standard properties: 
\begin{align}
& \Gamma\subset\mathbb{R}^n\text{ is an open, convex, connected symmetric cone with vertex at 0}, \label{21} \\
& \Gamma_n^+ = \{\lambda\in\mathbb{R}^n: \lambda_i > 0 ~\forall ~1\leq i \leq n\} \subseteq \Gamma \subseteq \Gamma_1^+ =  \{\lambda\in\mathbb{R}^n : \lambda_1+\dots+\lambda_n > 0\}, \label{22} \\
& f\in C^\infty(\Gamma)\cap C^0(\overline{\Gamma}) \text{ is concave, 1-homogeneous and symmetric in the }\lambda_i, \label{23}  \\
& f>0 \text{ in }\Gamma, \quad f = 0 \text{ on }\partial\Gamma, \quad f_{\lambda_i} >0 \text{ in } \Gamma \text{ for }1 \leq i \leq n. \label{24}
\end{align}
In \eqref{1}, $\nabla_{g_0}^2 u$ denotes the Hessian of $u$ with respect to $g_0$, 
\begin{equation*}
A[u](x) = A(x,u(x),du(x))\in\operatorname{Sym}(T^*_x\Omega\otimes T^*_x\Omega)
\end{equation*}
defines a symmetric $(0,2)$-tensor at each point $x\in \Omega$, $B[u](x) = B(x,u(x),du(x))$ takes real positive values, and for a $(1,1)$-tensor $T$ on $\Omega$, we denote by $\lambda(T(x))\in\mathbb{R}^n$ the eigenvalues of $T$ at $x$. Note that we are implicitly raising an index of the $(0,2)$-tensor $\nabla_{g_0}^2 u - A[u]$ in \eqref{1} using the inverse metric $g_0^{-1}$, and we continue to follow this convention throughout the paper. 

Typical examples of $(f,\Gamma)$ satisfying \eqref{21}--\eqref{24} are given by $(\sigma_k^{1/k},\Gamma_k^+)$ for $1\leq k \leq n$, where $\sigma_k$ is the $k$'th elementary symmetric polynomial, defined by 
\begin{equation*}
\sigma_k(\lambda_1,\dots,\lambda_n) = \sum_{1\leq i_1<\dots<i_k\leq n}\lambda_{i_1}\dots\lambda_{i_k},
\end{equation*}
and $\Gamma_k^+ = \{\lambda\in\mathbb{R}^n:\sigma_j(\lambda)>0\text{ for all }1\leq j \leq k\}$. In particular, $\sigma_1(\lambda(\cdot))$ is the trace operator and $\sigma_n(\lambda(\cdot))$ is the determinant operator.

Since the work of Caffarelli, Nirenberg \& Spruck \cite{CNS3} on fully nonlinear Hessian equations (corresponding to $A\equiv 0$ in \eqref{1}), there has been a significant amount of study devoted to equations of the form \eqref{1}. Such equations arise in the theory of optimal transport \cite{MTW},  geometric optics \cite{Wan96} and conformal geometry \cite{Via00a}, for example. We refer to \eqref{1} as an \textit{augmented Hessian equation}, following the recent study of boundary value problems associated to \eqref{1} on Euclidean domains by Jiang, Trudinger \& Yang \cite{JTY15} and Jiang \& Trudinger \cite{JT17, JT18, JT19, JT20}.

Crucial in the study of \eqref{1} are \textit{a priori} estimates on solutions. As is well-known, structural properties of the term $A$ play an important role in obtaining \textit{a priori} estimates for \eqref{1}, and hence also in the existence and regularity theory for \eqref{1}. Indeed, an example of Heinz-Lewy (see e.g.~Section 9.5 in \cite{Sch90}) demonstrates that \eqref{1} does not admit local pointwise second derivative estimates for certain choices of $A$. We are interested in the related problem of obtaining such estimates on solutions to \eqref{1} in terms of their $C^1$ and $W^{2,p}$ norms, in the absence of any structural assumptions on $A$ or $B$ and under only mild assumptions on $f$ and $\Gamma$. Our main motivation for considering this problem is twofold: \medskip 
\begin{enumerate}
	\item The problem of obtaining local pointwise second derivative estimates for \eqref{1} is then reduced to obtaining local $C^1$ and $W^{2,p}$ estimates for suitable $p$ (this may be useful for the existence theory -- see e.g.~Remark \ref{89}). \medskip 
	\item Such a result demonstrates that once $C^1$ estimates are obtained, any \textit{failure} of local pointwise second derivative estimates for \eqref{1} must actually occur at the $W^{2,p}$ level for suitable $p$.\medskip 
\end{enumerate}
Our lower bounds imposed on $p$ will be explicit in many cases, and we will also show that these lower bounds can be improved in special cases, e.g.~when $A$ and $B$ are convex in the gradient variable.

In addition to \eqref{23} and \eqref{24}, we introduce one more condition on $(f,\Gamma)$, which is related to the lower bound on the Sobolev exponent $p$ imposed in our first main result. To formulate this condition, we fix $(f,\Gamma)$ satisfying \eqref{21}--\eqref{24}, and for a symmetric matrix $A$, we denote by $F(A)$ the matrix with entries
\begin{equation*}
F(A)_i^j = \frac{\partial }{\partial A_j^i}f(\lambda(A)).
\end{equation*}
By \eqref{24}, $F(A)$ is positive definite if $\lambda(A)\in\Gamma$. Our condition is then as follows: there exist constants $C>0$ and $\gamma\geq 0$ (depending only on $(f,\Gamma)$) such that
\begin{equation}\label{26}
\frac{[\operatorname{tr}(F(A))]^n}{\det (F(A))} \leq C \bigg(\frac{\operatorname{tr}(A)}{f(\lambda(A))}\bigg)^\gamma \quad \text{for all }A \text{ with }\lambda(A)\in\Gamma.
\end{equation}
This condition was previously used by the author in \cite{D22}, and is satisfied in many important cases: \medskip 

\noindent\textbf{Example 1:} When $(f,\Gamma) = ((\sigma_k/\sigma_l)^{1/(k-l)},\Gamma_k^+)$ for some $0 \leq l < k \leq n$ and $2\leq k \leq n$ (with the convention that $\sigma_0=1$),  \eqref{26} is satisfied with $\gamma = (k-1)\max\{k-l, 2\}$ -- see \cite[Proposition 4.2]{BL}. In particular, when $(f,\Gamma) = (\sigma_k^{1/k},\Gamma_k^+)$ for $2\leq k \leq n$, \eqref{26} is satisfied with $\gamma = k(k-1)$. \medskip 

\noindent\textbf{Example 2:} When, in addition to \eqref{21} and \eqref{22}, the cone $\Gamma$ satisfies $(1,0,\dots,0)\in\Gamma$, \eqref{26} is satisfied with $\gamma =0$. This follows from the fact that $\frac{\partial f}{\partial \lambda_i}$ are the eigenvalues of $F$ and \cite[Proposition A.1]{LN20}, which asserts in this case the existence of a constant $\nu\in(0,1)$ such that 
\begin{equation}\label{85}
\frac{\partial f}{\partial\lambda_i}(\lambda) \geq \nu \sum_{j=1}^n\frac{\partial f}{\partial \lambda_j}(\lambda)\quad\text{for all }i=1,\dots,n \text{ and }\lambda\in\Gamma.
\end{equation}

\noindent We note that $\Gamma$ satisfies \eqref{21}, \eqref{22} and $(1,0,\dots,0)\in\Gamma$ if and only if there exists a cone $\widetilde{\Gamma}$ satisfying \eqref{21} and \eqref{22} and a number $\tau\in[0,1)$ such that $\Gamma = (\widetilde{\Gamma})^\tau$, where
\begin{align}
(\widetilde{\Gamma})^\tau \defeq \{\lambda\in\mathbb{R}^n:\tau\lambda + (1-\tau)\sigma_1(\lambda)(1,\dots,1)\in\widetilde{\Gamma}\}. \nonumber
\end{align}
We refer the reader to the appendix in \cite{DN22} for a proof of this statement. Moreover, if the pair $(f,\Gamma)$ satisfies \eqref{21}--\eqref{24}, then so does the pair $(f^\tau,\Gamma^\tau)$, where $f^\tau(\lambda)\defeq f(\tau\lambda + (1-\tau)\sigma_1(\lambda)(1,\dots,1))$. Clearly, $\Gamma^1 = \Gamma$ and $\Gamma^0 = \Gamma_1^+$.

Our first main result is as follows:

\begin{thm}\label{A}
	Let $\Omega$ be a bounded domain contained in a smooth Riemannian manifold $(M^n,g_0)$ of dimension $n\geq 2$. Suppose that $(f,\Gamma)$ satisfies \eqref{21}--\eqref{24}, $\Gamma\subseteq(\Gamma_2^+)^\tau$ for some $\tau\in(0,1]$, \eqref{26} holds for some constants $C>0$ and $\gamma\geq 0$, $A = A(x,z,\xi)\in C_{\operatorname{loc}}^{2}(\Omega\times\mathbb{R}\times T^*\Omega)$ is $\operatorname{Sym}(T^*\Omega\otimes T^*\Omega)$-valued and $B =B(x,z,\xi) \in C_{\operatorname{loc}}^{2}(\Omega\times\mathbb{R}\times T^*\Omega)$ is real-valued and positive. Finally, suppose that one of the following statements holds:
	\begin{enumerate}
		\item $A$ and $B$ are convex in $\xi$ and $p>\gamma$.\medskip 
		\item $p>n+\gamma$.  \medskip 
	\end{enumerate}
	Then if $u\in C^4(\Omega)$ is a solution to \eqref{1} and $\Omega'\Subset\Omega$, it holds that
	\begin{equation}\label{53}
	\|\nabla_{g_0}^2u\|_{L^\infty(\Omega')} \leq C
	\end{equation}
	where $C$ is a constant depending only on $f, \Gamma, n, p, g_0, A, B, \Omega, \Omega'$ and an upper bound for $\|\nabla_{g_0}^2 u\|_{L^p(\Omega)} + \|u\|_{C^1(\Omega)}$. Moreover, if $(f,\Gamma) = ((\sigma_2^{1/2})^\tau,(\Gamma_2^+)^\tau)$ for some $\tau\in(0,1]$, the convexity assumption on $B$ in Case 1 can be dropped. 
\end{thm}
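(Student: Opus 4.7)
My plan is to differentiate \eqref{1} twice to obtain a second-order elliptic differential inequality for a scalar quantity $w := \sigma_1(U)+K$, where $U := \nabla^2_{g_0}u - A[u]$ and $K$ is chosen large enough that $w\geq 1$, and then to close a Moser-type integral iteration using \eqref{26}. Writing $F^{ij} := (\partial f/\partial A^j_i)(\lambda(U))$ and $\mathcal{L} := F^{ij}\nabla^2_{ij}$, differentiating \eqref{1} twice in a coordinate direction $k$ and summing over $k$, using concavity of $f$ to discard the non-positive term $F^{ij,pq}U_{ij,k}U_{pq,k}$ and the Ricci commutator to handle curvature, should give a pointwise inequality of the schematic form
\begin{equation*}
\mathcal{L}w \geq F^{ij}(A_{ij})_{\xi_m\xi_l}\,u_{mk}u_{lk} + B_{\xi_m\xi_l}\,u_{mk}u_{lk} + \mathbf{b}^m\,\partial_m w - C\,\mathrm{tr}(F)\,(1+|\nabla^2_{g_0}u|),
\end{equation*}
where $\mathbf{b}^m := F^{ij}(A_{ij})_{\xi_m}+B_{\xi_m}$. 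Crucially, the third-derivative contributions arising from expanding $A_{ij,kk}$ and $B_{,kk}$ reassemble into the first-order term $\mathbf{b}^m\partial_m w$, which is a key simplification that removes the need for a separate $C^3$-type bound.

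\textbf{Moser iteration via \eqref{26}.} I multiply the above by $\eta^{2q}w^{\beta-1}$ for a standard cutoff $\eta\in C_c^\infty(\Omega)$ and parameters $\beta\geq 1$, $q\gg 1$, and integrate over $\Omega$. Integration by parts of $\int\eta^{2q}w^{\beta-1}\mathcal{L}w$ produces a favourable gradient quadratic $\beta\int\eta^{2q}w^{\beta-2}F^{ij}w_iw_j$, which after Cauchy--Schwarz absorbs the first-order contribution $\mathbf{b}^m\partial_m w$. The structural bound \eqref{26}, together with $f(\lambda(U))=B[u]$ being bounded below, lets one replace $\mathrm{tr}(F)$ by $C\det(F)^{1/n}(1+|\nabla^2_{g_0} u|)^{\gamma/n}$. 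Combining with a suitable Sobolev inequality (either the standard one on $(M,g_0)$ or a Michael--Simon-type estimate for the conformal volume element $\det(F)^{1/n}\,d\mathrm{vol}_{g_0}$) upgrades $L^\beta$ norms of $w$ to $L^{\kappa\beta}$ norms for some fixed $\kappa>1$, yielding a reverse-H\"older-type inequality that can be iterated Moser-style. In Case~1, convexity of $A$ and $B$ in $\xi$ makes the Hessian-quadratic contributions non-negative, so only the $|\nabla^2_{g_0}u|^{\gamma/n}$ factor needs absorbing, forcing the threshold $p>\gamma$. In Case~2 this sign is absent and the quadratic terms must instead be controlled via an $L^n$-based absorption (an Alexandrov/ABP-type estimate or a Cauchy--Schwarz between $\mathrm{tr}(F)$ and $\det(F)^{1/n}$), pushing the threshold up to $p>n+\gamma$.

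\textbf{The $\sigma_2$ refinement and main obstacle.} For $(f,\Gamma) = ((\sigma_2^{1/2})^\tau,(\Gamma_2^+)^\tau)$, the Newton tensor associated with $\sigma_2$ is divergence-free modulo explicit $\nabla A$-corrections, which lets one integrate the offending first-order term $B_{\xi_m}\partial_m w$ by parts onto $F^{ij}$ and onto the cutoffs without generating any new third-derivative contribution. Consequently the convexity hypothesis on $B$ in $\xi$ becomes unnecessary, while the one on $A$, which is still needed to handle the $F^{ij}(A_{ij})_{\xi_m\xi_l}$-quadratic term, is retained. The main technical obstacle throughout is the integration-by-parts step that absorbs $\mathbf{b}^m\partial_m w$ into $\int F^{ij}w_iw_j$ and rewrites the error with the correct power of $(1+|\nabla^2_{g_0} u|)$ coming from \eqref{26}: in Case~1 this closes via the convexity-generated Hessian-quadratic terms, in Case~2 via an extra $L^n$ integrability slot, and in the $\sigma_2$ refinement via the Newton-tensor divergence identity.
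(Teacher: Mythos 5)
Your proposal pursues a genuinely different route from the paper: you set up a second-order differential inequality for $w = \sigma_1(U)+K$ and run a Moser iteration with test function $\eta^{2q}w^{\beta-1}$, whereas the paper instead applies the Alexandrov--Bakelman--Pucci (ABP) maximum principle to $v=\eta\Delta u$ and exploits a pointwise bound for $|D\Delta u|$ on the upper contact set $\Gamma_v^+$ (Lemma~\ref{50}). The two strategies diverge precisely at the point where third-order derivatives of $u$ must be controlled, and this is where your argument has a genuine gap.

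\textbf{The gap.} When you integrate by parts $\int\eta^{2q}w^{\beta-1}F^{ij}\nabla_i\nabla_j w$ to produce the gradient quadratic $(\beta-1)\int\eta^{2q}w^{\beta-2}F^{ij}\nabla_i w\nabla_j w$, the integration by parts unavoidably also generates the term $\int\eta^{2q}w^{\beta-1}(\nabla_i F^{ij})\nabla_j w$. For a \emph{general} $(f,\Gamma)$ satisfying \eqref{21}--\eqref{24}, $\nabla_i F^{ij}$ is the contraction of $\partial^2 f/\partial W^2$ against $\nabla W$, and hence involves third derivatives of $u$ with no favourable structure; there is no divergence identity available, so this term cannot be discarded or absorbed. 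The same obstruction reappears if you instead integrate $\mathbf{b}^m\partial_m w$ by parts, since $\partial_m\mathbf{b}^m$ contains $\partial_m F^{ij}$. Moreover, the proposed Cauchy--Schwarz absorption of $\mathbf{b}^m\partial_m w$ into $\int F^{ij}w_iw_j$ requires control of $(F^{-1})^{ij}\mathbf{b}_i\mathbf{b}_j$, which is unavailable because the equation is non-uniformly elliptic and the smallest eigenvalue of $F$ may degenerate. This is exactly why the paper reserves the Moser-iteration/divergence-structure approach for the special case $(f,\Gamma)=(\sigma_2^{1/2},\Gamma_2^+)$ (Theorem~\ref{B}), where $\nabla_i F^{ij}$ is a quantity of order $\Delta u$ (see \eqref{-47}), and uses ABP for Theorem~\ref{A}: in the ABP framework one never differentiates $F$, and the contact-set lemma $\eta|D\Delta u|\leq (1+\beta)R^{-1}\eta^{-1/\beta}v$ replaces the integration by parts entirely.

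\textbf{A secondary point.} The mechanism by which the paper drops the $B$-convexity assumption for $(f,\Gamma)=((\sigma_2^{1/2})^\tau,(\Gamma_2^+)^\tau)$ is not the Newton-tensor divergence identity you invoke; it is the lower bound $\operatorname{tr}(F)\geq C^{-1}(\Delta u-\operatorname{tr}(A[u]))$ coming from \eqref{95}, which lets the quadratic Hessian term arising from $\Delta B[u]$ be absorbed into the $\operatorname{tr}(F)$-weighted error in \eqref{33}. Your proposed divergence-identity route is closer to what the paper does in Section~\ref{77} for Theorem~\ref{B}, which is a separate result. If you want to make your Moser route work for Theorem~\ref{A} in full generality, you would need an additional idea to handle $\nabla_i F^{ij}$, which is precisely what the contact-set lemma and ABP supply.
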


\begin{rmk}\label{63}
	The assumption $\Gamma\subseteq (\Gamma_2^+)^\tau$ for some $\tau\in(0,1]$ in Theorem \ref{A} implies a pointwise bound for $|\nabla_{g_0}^2 u|$ in terms of a pointwise bound for $|\Delta_{g_0} u|$ and an upper bound for $\|u\|_{C^1(\Omega)}$; this is clear from the formula $\sigma_2(A) = \frac{1}{2}(\operatorname{tr}(A)^2 - |A|^2)$. Moreover, the weaker assumption $\Gamma\subseteq \Gamma_1^+$ immediately implies a lower bound for $\Delta_{g_0} u$ in terms of $\|u\|_{C^1(\Omega)}$. Thus, to obtain \eqref{53}, it suffices to obtain an upper bound for $\Delta_{g_0}u$.
\end{rmk}

In light of the second example discussed above, if one assumes $(1,0,\dots,0)\in\Gamma$ in addition to the hypotheses of Case 1 of Theorem \ref{A}, then we can take any $p>0$. In fact, we will prove the following stronger result: 

\begin{thm}\label{61}
	Suppose that, in addition to \eqref{21}--\eqref{24}, $\Gamma$ satisfies $(1,0,\dots,0)\in\Gamma$ and $\Gamma\subseteq (\Gamma_2^+)^\tau$ for some $\tau\in(0,1]$. Suppose also that $A$ and $B$ are convex in $\xi$. Then if $u\in C^4(\Omega)$ is a solution to \eqref{1} and $\Omega'\Subset\Omega$, it holds that $\|\nabla_{g_0}^2 u\|_{L^\infty(\Omega')}\leq C$, where $C$ depends only on $f, \Gamma, n,g_0,A,B,\Omega,\Omega'$ and an upper bound for $\|u\|_{C^1(\Omega)}$. Moreover, if $(f,\Gamma) = ((\sigma_2^{1/2})^\tau,(\Gamma_2^+)^\tau)$ with $\tau\in(0,1]$, the convexity assumption on $B$ can be dropped. 
\end{thm}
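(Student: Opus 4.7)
The plan is to deduce Theorem \ref{61} from Theorem \ref{A} by exploiting $(1,0,\dots,0)\in\Gamma$ to eliminate the $W^{2,p}$ dependence. As noted in Example 2 of the introduction, this hypothesis forces condition \eqref{26} with $\gamma=0$, so Case 1 of Theorem \ref{A} applies for every $p>0$; I would take $p=1$. Fixing an intermediate open set $\Omega'\Subset\Omega_{**}\Subset\Omega$ and applying Theorem \ref{A} to the pair $(\Omega_{**},\Omega')$ gives
\[
\|\nabla_{g_0}^2 u\|_{L^\infty(\Omega')} \leq C\bigl(\|\nabla_{g_0}^2 u\|_{L^1(\Omega_{**})} + \|u\|_{C^1(\Omega)}\bigr),
\]
and the task reduces to bounding $\|\nabla_{g_0}^2 u\|_{L^1(\Omega_{**})}$ in terms of $\|u\|_{C^1(\Omega)}$ alone. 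The use of the intermediate $\Omega_{**}\Subset\Omega$ (rather than $\Omega$ itself) is what gives the $L^1$ step below room to employ a compactly supported cutoff.

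For that $L^1$ estimate, I would first observe that $\Gamma\subseteq\Gamma_1^+$ combined with equation \eqref{1} forces the pointwise lower bound $\Delta_{g_0} u \geq \operatorname{tr}_{g_0}(A[u]) \geq -C_0$ on $\Omega$, with $C_0$ depending only on $A$ and $\|u\|_{C^1(\Omega)}$. Choosing a cutoff $\phi\in C_c^\infty(\Omega)$ with $0\leq\phi\leq 1$ and $\phi\equiv 1$ on $\Omega_{**}$, integration by parts on $(M,g_0)$ yields
\[
\int_\Omega \phi\,\Delta_{g_0} u \, dV_{g_0} = -\int_\Omega \langle\nabla\phi,\nabla u\rangle_{g_0}\, dV_{g_0},
\]
whose right-hand side is bounded by $C\,\|u\|_{C^1(\Omega)}$ with $C$ depending on $\phi$, $g_0$ and $\Omega$. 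Splitting the left-hand side as $\int_{\Omega_{**}}\Delta_{g_0} u + \int_{\Omega\setminus\Omega_{**}} \phi\,\Delta_{g_0} u$ and using the pointwise lower bound on $\Delta_{g_0} u$ to control the second piece from below produces an upper bound on $\int_{\Omega_{**}}\Delta_{g_0} u$; combined once more with the pointwise lower bound, this gives $\|\Delta_{g_0} u\|_{L^1(\Omega_{**})}\leq C(\|u\|_{C^1(\Omega)})$.

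To pass from the Laplacian to the full Hessian, I would invoke the pointwise inequality underlying Remark \ref{63}: from $\Gamma\subseteq(\Gamma_2^+)^\tau$ and $\sigma_2 = \tfrac{1}{2}(\sigma_1^2 - |\cdot|^2)$ one obtains $|\nabla_{g_0}^2 u - A[u]|\leq C_{n,\tau}\,|\Delta_{g_0} u - \operatorname{tr}_{g_0}(A[u])|$, so that $|\nabla_{g_0}^2 u|\leq C(|\Delta_{g_0} u| + \|u\|_{C^1(\Omega)})$ pointwise. Integrating over $\Omega_{**}$ closes the loop. The \emph{moreover} clause follows identically, invoking the corresponding strengthening of Case 1 of Theorem \ref{A} in which the convexity assumption on $B$ is dropped for $(\sigma_2^{1/2})^\tau$. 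I do not expect any genuine obstacle here: the argument is essentially bookkeeping on top of Theorem \ref{A}, with the one conceptual point being that once $\gamma=0$ is available, the $\Gamma\subseteq\Gamma_1^+$ lower bound on $\Delta_{g_0} u$ together with a single integration by parts replaces the $W^{2,p}$ hypothesis entirely.
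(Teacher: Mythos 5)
Your proposal is correct and follows essentially the same route as the paper: reduce to $p=1$ via Example 2 ($\gamma=0$), apply Case 1 of Theorem \ref{A}, and then obtain the $L^1$ Hessian bound from the $\Gamma_1^+$ lower bound on $\Delta_{g_0}u$ combined with an integration by parts against a cutoff, finishing via Remark \ref{63}. The one cosmetic difference is that the paper packages the lower bound $\Delta_{g_0}u \geq -C_0$ by adding a strictly convex function $\phi$ (on a small geodesic ball) so that $u + C_1\phi$ is subharmonic, and then invokes the standard $L^1$ estimate $\int |\Delta_{g_0}\psi|\,dv_{g_0}\leq CR^{-2}\|\psi\|_{L^1}$ for subharmonic $\psi$; you bypass the auxiliary convex function and split $\int \phi\,\Delta_{g_0}u$ directly, which is equivalent and arguably slightly cleaner since it avoids the ``$R$ small enough'' qualification needed for the existence of $\phi$.
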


	\begin{rmk}
	We point out that there is no dependence on any $L^p$ norm of $\nabla_{g_0}^2 u$ in Theorem \ref{61}. Under the assumptions of this theorem, \eqref{1} is strictly elliptic: by 1-homogeneity and concavity of $f$, we have $\sum_{j=1}^n\frac{\partial f}{\partial \lambda_j}(\lambda) = f(\lambda) + \sum_{j=1}^n \frac{\partial f}{\partial\lambda_j}(\lambda)(1-\lambda_i) \geq f(1,\dots,1)>0$, and strict ellipticity then follows from \eqref{85}. 
\end{rmk}

In the special case that $(f,\Gamma) = (\sigma_2^{1/2},\Gamma_2^+)$, we are also able to prove (using different methods) a counterpart to Case 2 of Theorem \ref{A} assuming $p>3n/2$. We note that this only improves on the assumption $p>n+2$ when $n=2$ or $3$, but we state the following result in arbitrary dimension:

\begin{thm}\label{B}
	Let $\Omega$ be a bounded domain contained in a smooth Riemannian manifold $(M^n,g_0)$ of dimension $n\geq 2$. Suppose $A = A(x,z,\xi)\in C_{\operatorname{loc}}^{2}(\Omega\times\mathbb{R}\times T^*\Omega)$ is $\operatorname{Sym}(T^*\Omega\otimes T^*\Omega)$-valued and $B =B(x,z,\xi) \in C_{\operatorname{loc}}^{2}(\Omega\times\mathbb{R}\times T^*\Omega)$ is real-valued and positive. Assume $p>3n/2$. Then if $u\in C^4(\Omega)$ is a solution to \eqref{1} with $(f,\Gamma) = (\sigma_2^{1/2},\Gamma_2^+)$ and $\Omega'\Subset\Omega$, the estimate \eqref{53} holds where $C$ is a constant depending only on $n, p, g_0, A, B, \Omega, \Omega'$ and an upper bound for $\|\nabla_{g_0}^2 u\|_{L^p(\Omega)} + \|u\|_{C^1(\Omega)}$. 
\end{thm}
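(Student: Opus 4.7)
By Remark \ref{63}, it suffices to obtain an upper bound on $\Delta_{g_{0}}u$. The plan is a Moser iteration that exploits the near-divergence structure of the $\sigma_{2}$ operator. Write $U=\nabla_{g_{0}}^{2}u-A[u]$ and set $w=\operatorname{tr}_{g_{0}}(U)+K$, with $K$ a large constant (depending on $\|u\|_{C^{1}(\Omega)}$ and the geometry) chosen so that $w\geq 1$ on $\Omega$. The algebraic identity $2\sigma_{2}(U)=(\operatorname{tr}_{g_{0}}U)^{2}-|U|_{g_{0}}^{2}$, combined with the equation $\sigma_{2}(U)=B[u]^{2}$, gives the pointwise comparison $|\nabla_{g_{0}}^{2}u|\leq Cw$, so it suffices to bound $w$ in $L^{\infty}(\Omega')$.

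Differentiating $\sigma_{2}(U)=B[u]^{2}$ twice, tracing, and commuting covariant derivatives through the Riemann tensor of $g_{0}$ yields a differential inequality
\[
\sigma_{2}^{ij}\,\nabla_{i}\nabla_{j}w\geq -C\bigl(1+|\nabla_{g_{0}}^{2}u|^{2}\bigr),
\]
where $\sigma_{2}^{ij}=(\operatorname{tr} U)\,g_{0}^{ij}-U^{ij}$ is the linearization of $\sigma_{2}$ at $U$, positive semidefinite on $\Gamma_{2}^{+}$. I would then test this against $w^{q-1}\eta^{2}$ for $q\geq 1$ and a cutoff $\eta\in C_{c}^{2}(\Omega)$, and integrate by parts. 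The key structural input is that, were it not for the augmenting term and the non-flat background, $\sigma_{2}^{ij}(\nabla^{2}u)$ would be divergence-free; on $(M^{n},g_{0})$ with $A=A(x,u,du)$ a direct computation shows that $\nabla_{i}\sigma_{2}^{ij}$ is at most linear in $\nabla_{g_{0}}^{2}u$, with coefficients controlled by $\|u\|_{C^{1}}$ and $g_{0}$.

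After applying Cauchy--Schwarz to the cross terms to absorb the natural Dirichlet piece $\int\sigma_{2}^{ij}\nabla_{i}w\,\nabla_{j}w\,w^{q-2}\eta^{2}$, one obtains a Caccioppoli estimate of the schematic form
\[
\int |\nabla(w^{q/2})|^{2}\eta^{2}\leq Cq^{2}\int (1+|\nabla_{g_{0}}^{2}u|^{3})\,w^{q-1}\eta^{2}+\text{cutoff terms}.
\]
Applying the Sobolev inequality $\|w^{q/2}\eta\|_{L^{2n/(n-2)}}^{2}\leq C\|\nabla(w^{q/2}\eta)\|_{L^{2}}^{2}$ on the left, and H\"older's inequality with exponents $p/3$ and $p/(p-3)$ to the cubic term on the right, produces the schematic recurrence
\[
\|w\|_{L^{qn/(n-2)}(\Omega')}^{q}\leq Cq^{2}\,\|\nabla_{g_{0}}^{2}u\|_{L^{p}(\Omega)}^{3}\,\|w\|_{L^{qp/(p-3)}(\Omega'')}^{q}.
\]
For this to improve the integrability exponent at each step, one needs $\frac{qn}{n-2}>\frac{qp}{p-3}$, equivalently $p(n+2q)>3n(q+1)$, which as $q\to\infty$ is exactly the hypothesis $p>3n/2$. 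Iterating this inequality over a nested sequence of cutoffs then bootstraps $L^{p}$ control of $\nabla_{g_{0}}^{2}u$ up to the desired $L^{\infty}$ bound.

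The principal obstacle is the careful isolation and estimation of the cubic term $\int|\nabla_{g_{0}}^{2}u|^{3}w^{q-1}\eta^{2}$ that drives the $\tfrac{3n}{2}$ threshold. It arises from the combination of (i) the quadratic right-hand side in the differential inequality for $w$, (ii) the linear-in-$\nabla_{g_{0}}^{2}u$ deviation from divergence-freeness of $\sigma_{2}^{ij}$ caused by $A[u]$ and the Riemannian curvature, and (iii) the potential degeneracy of $\sigma_{2}^{ij}$ as an elliptic tensor -- its smallest eigenvalue equals the sum of the $n-1$ smallest eigenvalues of $U$, which can vanish along $\partial\Gamma_{2}^{+}$. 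Absent an \emph{a priori} ellipticity bound, this cubic term cannot simply be absorbed into the Dirichlet piece, and one must instead bring it under H\"older control via the global $W^{2,p}$ hypothesis, with $p>3n/2$ being precisely the threshold that permits the iteration to close. Tracking explicit $q$-dependence of the constants so the infinite product converges, and arranging nested cutoffs to localize to $\Omega'\Subset\Omega$, are by now standard.
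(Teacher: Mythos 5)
Your overall strategy---test against $w^{q-1}\eta^2$, exploit the near-divergence structure of the first Newton tensor, and iterate a reverse H\"older estimate---matches the paper's approach, and your identification of $p>3n/2$ as the iteration threshold is correct. However there is a genuine gap at the very first step, and a secondary omission that is also needed to close the argument.

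The claimed pointwise differential inequality $\sigma_2^{ij}\nabla_i\nabla_j w\geq -C(1+|\nabla^2_{g_0}u|^2)$ is false for gradient-dependent $A$ and $B$. Differentiating $\sigma_2(U)=B[u]^2$ twice and tracing produces, via the chain rule, genuine \emph{third}-order terms: $\Delta B[u]$ contains $\frac{\partial B}{\partial\xi_l}[u]\,\nabla_l\Delta u$, and $\sigma_2^{ij}\Delta A[u]_{ij}$ contains $\sigma_2^{ij}\frac{\partial A_{ij}}{\partial\xi_l}[u]\,\nabla_l\Delta u$. Neither of these is bounded pointwise by any power of $|\nabla^2 u|$. (Moreover, even the purely second-order contributions from $\frac{\partial^2 A_{ij}}{\partial\xi_p\partial\xi_l}[u]\nabla^k\nabla_p u\,\nabla_k\nabla_l u$, contracted with $\sigma_2^{ij}=O(w)$, are $O(w^3)$, not $O(w^2)$.) The paper therefore keeps the third-order terms explicitly in the pointwise inequality and disposes of them at the \emph{integral} level: after two integrations by parts, the term $-\frac{1}{q}\int\eta w^q\,\nabla_j\nabla_iF^{ij}$ yields $-\frac{1}{q}\int\eta w^q\frac{\partial A_{ij}}{\partial\xi_a}[u]\nabla^i\nabla^j\nabla_a u$, while integrating by parts in $-\int\eta w^{q-1}\nabla^i\nabla^j u\,\frac{\partial A_{ij}}{\partial\xi_a}[u]\nabla_a\Delta u$ (which comes from $F^{ij}\Delta A[u]_{ij}$) yields $+\frac{1}{q}\int\eta w^q\frac{\partial A_{ij}}{\partial\xi_a}[u]\nabla^i\nabla^j\nabla_a u$ plus controlled errors. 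These two third-order integrals cancel \emph{exactly}. This cancellation is the key mechanism that makes the $\sigma_2$ case special (and why the method does not obviously extend to $\sigma_k$, $k\geq 3$); your proposal gives no indication of it, and without it the iteration cannot be set up.

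Separately, you correctly note the potential degeneracy of $\sigma_2^{ij}$ but do not supply the lower bound that the argument requires. The paper proves, via concavity of $\sigma_2/\sigma_1$ together with the equation, that $\sigma_2^{ij}\geq \dfrac{B[u]^2}{\Delta u+\alpha}\,g_0^{ij}$. This is precisely what converts $\int\sigma_2^{ij}\nabla_i w\,\nabla_j w\,w^{q-2}\eta^2$ into $\int|\nabla(w^{(q-1)/2})|^2\eta^2$; because of the extra factor $w^{-1}$ from the degeneracy, the exponent on the left is $(q-1)/2$, not $q/2$ as you write. This is only a shift and does not affect the $3n/2$ threshold, but together with the missing cancellation it means the ``schematic Caccioppoli estimate'' in your outline is not something that a direct computation yields; it is the \emph{output} of the delicate integral manipulations the paper carries out, not a pointwise-to-integral one-liner.
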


\begin{rmk}\label{96}
	If $g_0$ is the flat metric and $A$ is of the form $A(x,z,\xi) = A_1(x,z,\xi)\operatorname{Id}$ for some scalar function $A_1$, then by previous work of the author and Nguyen in \cite{DN21}, Theorem \ref{B} extends to the case $(f,\Gamma) = (\sigma_k^{1/k},\Gamma_k^+)$ assuming $p>(k+1)n/2$. This improves on Case 2 of Theorem \ref{A} when $k>n/2$. If one further assumes $A_1(x,z,\xi) = A_2(x,z)|\xi|^2$ for some nonnegative function $A_2$, then by \cite{DN21} we may take $p>kn/2$ when $(f,\Gamma) = (\sigma_k^{1/k},\Gamma_k^+)$. This improves on Case 1 of Theorem \ref{A} when $k>\frac{n}{2}+1$. 
\end{rmk}

We now briefly discuss our results in the context of previous work on second derivative estimates for augmented Hessian equations -- unless otherwise mentioned, any statements in the following discussion concern the Euclidean setting. As mentioned above, an example of Heinz-Lewy \cite{Sch90} demonstrates that \eqref{1} may not admit local pointwise second derivative estimates for certain choices of $A$. In the context of optimal transport, Ma, Trudinger \& Wang \cite{MTW} introduced the notion of \textit{strict regularity} for $A$ (also known as the \textit{strict MTW condition}, or \textit{strict codimension one convexity}), which imposes
\begin{equation}\label{2}
\sum_{i,j,k,l} \partial^2_{\xi_k\xi_l}A_{ij}(x,z,\xi)\zeta_i\zeta_j \eta_k\eta_l \geq a_0|\zeta|^2|\eta|^2 \quad\text{for all }\zeta,\eta\in\mathbb{R}^n \text{ s.t. }\zeta\cdot\eta=0
\end{equation} 
for some constant $a_0>0$ and all $(x,z,\xi)\in \Omega\times\mathbb{R}\times\mathbb{R}^n$. Strict regularity of $A$ is known to be sufficient to obtain local second derivative estimates and, for certain boundary value problems, global second derivative estimates -- see for instance the work of Trudinger et.~al.~\cite{MTW, JT18} in the Euclidean setting, and \cite{Guan14, GJ15, GJ16} for some related work of Guan et.~al.~ in the Riemannian setting. In fact, the weaker notion of \textit{regularity} of $A$ (also known as the \textit{MTW condition}, or \textit{codimension one convexity}), in which one allows $a_0=0$ in \eqref{2}, is sufficient to obtain global second derivative estimates for certain boundary value problems -- see e.g.~Jiang \& Trudinger \cite{JT17}. In the context of optimal transport, Loeper \cite{Loe} showed that even $C^1$ regularity may fail if $A$ is not regular.

If $A$ is assumed to be regular (but not strictly regular), then the question of whether \eqref{1} admits local second derivative estimates is less well-understood. For instance, it is still unknown whether solutions to \eqref{1} with $A\equiv 0$, $B=1$ and $(f,\Gamma) = (\sigma_k^{1/k},\Gamma_k^+)$ admit local second derivative estimates when $k=2$ and $n\geq 4$ (Warren \& Yuan \cite{WY09} gave an affirmative answer for $k=2$ when $n=3$, and Urbas \cite{Urb90} gave counterexamples for $k\geq 3$). Our assumption in Case 1 of Theorem \ref{A} that $A$ is convex in the gradient variable -- also referred to by Jiang \& Trudinger in \cite{JT20} as \textit{regularity without orthogonality} -- is a stronger assumption than regularity of $A$. However, it is easy to construct examples of $A$ which are convex in the gradient variable but not strictly regular (e.g.~any $A$ which is independent of the gradient variable). It would be interesting to determine whether Case 1 of Theorem \ref{A} still holds if $A$ is only assumed to be regular. 

\begin{rmk}
	In the papers cited above which consider the strict regularity condition on $A$, results are largely obtained using Pogorelov-type estimates, which by their nature result in second derivative estimates that depend on the $C^1$ norm of the solution and other known data, rather than any $W^{2,p}$ norms. 
\end{rmk}

An important example for which not even regularity of $A$ is satisfied is the $\sigma_k$-Yamabe equation in the so-called negative case: given a closed Riemannian manifold $(M^n,g_0)$ of dimension $n\geq 3$ and an integer $2\leq k \leq n$, one looks for a solution $g_u=e^{2u}g_0$ to 
\begin{equation}\label{37}
\sigma_k\big(\lambda(-g_u^{-1}A_{g_u})\big)  = 1, \quad \lambda(-g_u^{-1}A_{g_u})\in\Gamma_k^+
\end{equation}
on $M^n$, typically under the assumption that the background metric $g_0$ satisfies \linebreak$\lambda(-g_0^{-1}A_{g_0})\in\Gamma_k^+$ on $M^n$. Here, $A_{g_u}$ is the $(0,2)$-Schouten tensor of $g_u$, defined by 
\begin{equation*}
A_{g_u} = \frac{1}{n-2}\bigg(\operatorname{Ric}_{g_u} - \frac{R_{g_u}}{2(n-1)}g_u\bigg),
\end{equation*}
where $\operatorname{Ric}_{g_u}$ and $R_{g_u}$ denote the Ricci tensor and scalar curvature of $g_u$, respectively. The Schouten tensor arises in the Ricci decomposition of the Riemann curvature tensor and is an important quantity in conformal geometry. When $k=1$, the equation \eqref{37} reduces to the Yamabe equation in the case of negative Yamabe invariant. For $k\geq 2$, the $\sigma_k$-Yamabe equation was first studied by Viaclovsky in \cite{Via00a}. Note that \eqref{37} falls into the framework of \eqref{1}, in light of the conformal transformation law 
\begin{equation}\label{97}
- A_{g_u} = \nabla_{g_0}^2u - du\otimes du + \frac{|du|_{g_0}^2}{2}g_0 - A_{g_0}. 
\end{equation}
Whilst global \textit{a priori} $C^1$ estimates on solutions to \eqref{37} are known due to Gursky \& Viaclovsky \cite{GV03b}, neither local nor global $C^2$ estimates are known. In light of Theorems \ref{A} and \ref{B}, we have: 

\begin{cor}\label{3}
	Let $\Omega$ be a bounded domain contained in a smooth Riemannian manifold $(M^n,g_0)$ of dimension $n\geq 3$. Let $2\leq k \leq n$ be an integer, and assume $p>n+k(k-1)$ if $k\geq 3$ and $p>\min(n+2, \frac{3n}{2})$ if $k=2$. Then if $u\in C^4(\Omega)$ is a solution to \eqref{37} on $\Omega$, and $\Omega'\Subset\Omega$, it holds that
	\begin{equation*}
	\|\nabla_{g_0}^2u\|_{L^\infty(\Omega')} \leq C
	\end{equation*}
	where $C$ is a constant depending only on $k,n, p, g_0, \Omega,\Omega'$ and an upper bound for \linebreak $\|\nabla_{g_0}^2 u\|_{L^p(\Omega)} + \|u\|_{C^1(\Omega)}$. 
\end{cor}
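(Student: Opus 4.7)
\textbf{Proof proposal for Corollary \ref{3}.} The strategy is to cast \eqref{37} into the form \eqref{1} via the conformal transformation law \eqref{97} and then quote Theorems \ref{A} and \ref{B} directly; no new arguments are required. Since $g_u^{-1} = e^{-2u}g_0^{-1}$, combining with \eqref{97} yields
\[
-g_u^{-1}A_{g_u} \,=\, e^{-2u}\,g_0^{-1}\bigl(\nabla_{g_0}^2 u - A[u]\bigr), \qquad A[u] \defeq du\otimes du - \tfrac{|du|_{g_0}^2}{2}g_0 + A_{g_0}.
\]
Using the $k$-homogeneity of $\sigma_k$ and taking a $k$-th root, equation \eqref{37} becomes
\[
\sigma_k^{1/k}\bigl(\lambda(\nabla_{g_0}^2 u - A[u])\bigr) = e^{2u}, \qquad \lambda(\nabla_{g_0}^2 u - A[u])\in\Gamma_k^+,
\]
which is an instance of \eqref{1} with $(f,\Gamma) = (\sigma_k^{1/k},\Gamma_k^+)$, with $A(x,z,\xi) = \xi\otimes\xi - \tfrac{1}{2}|\xi|_{g_0}^2 g_0(x) + A_{g_0}(x)$, and with $B(x,z,\xi) = e^{2z}$ (smooth, positive, and independent of $\xi$).

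Next I would verify the structural hypotheses. The pair $(\sigma_k^{1/k},\Gamma_k^+)$ satisfies \eqref{21}--\eqref{24} by standard facts, and since $k\geq 2$ we have $\Gamma_k^+\subseteq \Gamma_2^+ = (\Gamma_2^+)^1$, so the inclusion $\Gamma\subseteq(\Gamma_2^+)^\tau$ holds with $\tau=1$. By Example 1 in the introduction, condition \eqref{26} holds with exponent $\gamma = k(k-1)$, and $A,B$ are of class $C^\infty_{\operatorname{loc}}$. Case 2 of Theorem \ref{A} therefore applies for any $p > n + k(k-1)$, giving the estimate for all $k\geq 2$ (with threshold $p > n + 2$ when $k=2$). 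For $k=2$ the same setup also meets the hypotheses of Theorem \ref{B}, which allows the smaller threshold $p > 3n/2$; choosing whichever of the two thresholds is smaller yields the stated range $p > \min\{n+2,3n/2\}$ for $k=2$.

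There is no substantial obstacle in the argument — all the analytic work has been done in Theorems \ref{A} and \ref{B}. It is, however, worth flagging that one cannot improve the lower bounds on $p$ by appealing to Case 1 of Theorem \ref{A}, since the tensor $\xi\otimes\xi - \tfrac{1}{2}|\xi|_{g_0}^2 g_0$ fails to be convex in $\xi$: pairing its $\xi$-Hessian with a direction $\zeta$ produces the matrix $2\zeta\otimes\zeta - |\zeta|_{g_0}^2 g_0$, which is negative on the $g_0$-orthogonal complement of $\zeta$. Similarly, Theorem \ref{61} is not available because $(1,0,\dots,0)\notin\Gamma_k^+$ for any $k\geq 2$. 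Thus the proof reduces to the rewriting above, the verification of hypotheses just listed, and citations of the appropriate cases of Theorems \ref{A} and \ref{B}.
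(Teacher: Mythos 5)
Your proposal is correct and is essentially the argument the paper intends: the paper offers no separate proof of Corollary~\ref{3}, presenting it simply as a direct consequence of Theorems~\ref{A} and~\ref{B} after rewriting \eqref{37} in the form \eqref{1} via \eqref{97}. Your rewriting, the verification that $(\sigma_k^{1/k},\Gamma_k^+)$ satisfies \eqref{21}--\eqref{24}, \eqref{26} with $\gamma=k(k-1)$, and $\Gamma_k^+\subseteq\Gamma_2^+$, and the choice of the better threshold for $k=2$ all match; your side remarks on why Case~1 and Theorem~\ref{61} are unavailable are accurate and consistent with the paper's own discussion of the non-regularity of this $A$.
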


\begin{rmk}\label{89}
	Using the uniform $C^1$ estimates of \cite{GV03b}, Li \& Nguyen \cite{LN20b} proved the existence of a Lipschitz viscosity solution to \eqref{37}. Following the argument of \cite{GV03b} (where $C^2$ estimates and existence of solutions are proved for a trace-modified version of \eqref{37}), Corollary \ref{3} implies existence of a smooth solution to \eqref{37} if $W^{2,p}$ estimates can be established for some $p>n+k(k-1)$ if $k\geq 3$ and $p>\min(n+2, \frac{3n}{2})$ if $k=2$. 
	\end{rmk} 

\begin{rmk}
	If $(M^n,g_0)$ is locally conformally flat, then by Remark \ref{96} and the change of variables $u=\log v$ in \eqref{97}, one may replace the lower bounds for $p$ in Corollary \ref{3} and Remark \ref{89} by $p>\min (n+k(k-1), \frac{(k+1)n}{2})$ for any $k\geq 2$.
\end{rmk}

\begin{rmk}
	For the positive case of the $\sigma_k$-Yamabe equation (in which $-A_{g_u}$ is replaced with $A_{g_u}$), global pointwise estimates were established in \cite{Via02} and local pointwise estimates were established in varying levels of generality in \cite{Che05, GW03b, JLL07, LL03, Li09, Wan06}, for example. For related work involving integral estimates, see for instance \cite{CGY02a, Gonz05, Gonz06, Han04}.
\end{rmk}

We now briefly remark on the methods used to prove Theorems \ref{A} and \ref{B}. Our method for proving Theorem \ref{A} is similar to that used in \cite{D22}, and is based on an application of the Alexandrov-Bakelman-Pucci (ABP) estimate to produce an upper bound for $\Delta_{g_0} u$ in terms of certain integral quantities. This method is inspired by previous work of Bao et.~al. \cite{BCGJ03} on Hessian quotient equations on Euclidean domains (see also \cite{BL}), suitably extended to deal with third order terms arising from $A$, more general operators $f$ and the presence of a non-Euclidean metric. On the other hand, our method for proving Theorem \ref{B} is similar to that used in \cite{DN21}, and is based on an integrability improvement argument followed by Moser iteration. Similar ideas were used by Urbas \cite{Urb00, Urb01} in the context of $k$-Hessian equations on Euclidean domains. In comparison with the proof of Theorem \ref{A}, it is now both the term $A$ \textit{and} the linearised operator $F_i^j$ which produce new third order terms in our estimates (in the $k$-Hessian case, $F_i^j$ is divergence-free). However, by exploiting a divergence structure in the case $k=2$, we are able to produce a cancellation phenomenon to deal with these third order terms. This is somehow more delicate than the method used to prove Theorem \ref{A}, where any third order terms are estimated more directly using properties of the concave envelope of a suitable function, and terms involving $F$ are ultimately dealt with using the condition \eqref{26} (rather than using any divergence structure to produce cancellations). We note than in both \cite{D22} and \cite{DN21}, we obtained local $C^{1,1}$ estimates on $W^{2,p}$-strong solutions to a more restrictive class of equations than \eqref{1}, and our estimates applied only in the Euclidean setting. In this paper we are concerned only with \textit{a priori} estimates, which allows us to address a much broader class of equations, including on Riemannian manifolds.

The plan of the paper is as follows. In Section \ref{A} we prove Theorem \ref{A}, starting with a proof of the first case in Section \ref{4}. The proof of the second case of Theorem \ref{A} is given in Section \ref{5}, and requires only minor amendments to the proof of the first case. In Section \ref{88}, we prove the last statement in Theorem \ref{A}. In Section \ref{90} we prove Theorem \ref{61}, and in Section \ref{77} we give the proof of Theorem \ref{B}. \medskip

\noindent\textbf{Acknowledgements:} The author would like to thank Luc Nguyen and Yannick Sire for helpful comments on the introduction to this paper. A preliminary version of Theorem \ref{B} was obtained whilst the author was supported by EPSRC grant EP/L015811/1.

\section{Proof of Theorem \ref{A}} 

In this section we prove Theorem \ref{A}. We start in Section \ref{4} by addressing Case 1 in Theorem \ref{A}, namely we assume that $A$ and $B$ are convex in $\xi$ and $p>\gamma$. In Section \ref{5} we then prove Case 2 in Theorem \ref{A}, where we drop the convexity assumptions on $A$ and $B$ but impose $p>n+\gamma$; this requires only minor amendments to the arguments in Case 1. In Section \ref{88}, we explain how the convexity assumption on $B$ in Case 1 of Theorem \ref{A} can be removed if $(f,\Gamma) = ((\sigma_2^{1/2})^\tau, (\Gamma_2^+)^\tau)$ for some $\tau\in(0,1]$. 

First, some remarks on our notation and conventions for this section are in order:  \medskip

\noindent $\bullet$ Since our desired estimates are local, it suffices (by a standard covering argument) to consider the case that $\Omega=B_{2R}$ and $\Omega'=B_{R}$ are Euclidean balls centred at the origin in $\mathbb{R}^n$, equipped with a Riemannian metric $g_0$.  \medskip 

\noindent$\bullet$ $\nabla $, $\Delta $ and $\nabla^2 $ will denote the gradient, Laplacian and Hessian with respect to $g_0$, respectively. We instead write $D$ and $D^2$ for the gradient and Hessian with respect to the flat metric on $\mathbb{R}^n$, respectively. $\nabla_i$ will denote the covariant derivative with respect to $g_0$ in the $i$'th coordinate direction, and $\partial_i$ will denote the Euclidean partial derivative in the $i$'th coordinate direction. \medskip 

\noindent $\bullet$ $|\cdot|_{g_0}$ will indicate a norm taken with respect to $g_0$, and $|\cdot|$ (without any subscript) will indicate a norm taken using the standard Euclidean inner product. \medskip 

\noindent$\bullet$ We write $dx$ for the volume form of the Euclidean metric, and $dv_{g_0} = \sqrt{\operatorname{det}((g_0)_{ij})}\,dx$ for the volume form of the metric $g_0$. \medskip 

\noindent$\bullet$ We denote by $C$ any constant depending on $f, \Gamma, n, p, g_0, A, B, R$ and an upper bound for $\|u\|_{C^1(B_{2R})} + \|\nabla^2 u\|_{L^\gamma(B_{2R},g_0)}$; the dependence on $\|\nabla^2_{g_0}u\|_{L^p(B_{2R},g_0)}$ in our final estimate will be explicit. The value of $C$ may change from line to line. \medskip 

\noindent$\bullet$ We use Einstein summation convention whenever an index appears in an upper and lower position. Indices will be implicitly raised and lowered in calculations using the metric $g_0$ and its inverse, e.g.~$A[u]_{ij} = (g_0)_{ia}A[u]_j^a$.

\subsection{Proof of Case 1 in Theorem \ref{A}}\label{4}

~\medskip 

\noindent Let us briefly explain the main components of the proof. We start by defining
\begin{equation*}
\eta(x) = \bigg(1 - \frac{|x|^2}{4R^2}\bigg)^{\beta}
\end{equation*}
and $v=\eta\Delta u$, where $\beta>2$ is a constant to be determined and $|x|$ is the Euclidean distance from $x\in B_{2R}$ to the origin (recall that $B_{2R}$ now denotes the Euclidean ball of radius $2R$ centred at the origin). Recall that, if we denote $W[u] = \nabla_{g_0}^2 u - A[u]$ (considered as a $(1,1)$-tensor), then the linearised operator 
\begin{equation*}
F_i^j = \frac{\partial f(\lambda(W[u]))}{\partial (W[u])_j^i}
\end{equation*}
is positive definite in $B_{2R}$ by ellipticity. Our first step is to obtain an upper bound for $-F^{ij}\partial_i\partial_j v$ in terms of $\operatorname{tr}(F)|D\Delta u|$, $\operatorname{tr}(F)|\Delta u|$ and lower order terms. More precisely, we prove: 
\begin{lem}\label{7}
	Let $g_0$ be a Riemannian metric on $\Omega = B_{2R}$ and suppose $f,\Gamma, A$ and $B$ are as in Case 1 of Theorem \ref{A} (with $(f,\Gamma)$ not necessarily satisfying \eqref{26}). Then for any solution $u\in C^4(B_{2R})$ to \eqref{1}, it holds that 
	\begin{align}\label{36}
	-F^{ij}\partial_i\partial_j v & \leq C\operatorname{tr}(F)\bigg(  \eta\Big(|D\Delta u| + |\nabla^2 u| + 1 \Big) + |D \eta||D\Delta u|   + \big(|D^2\eta| + |D \eta|\big)|\Delta u|\bigg) \nonumber \\
	& \qquad + C\eta\Big(|D\Delta u|+ |\nabla^2 u| + 1 \Big).
	\end{align}
\end{lem}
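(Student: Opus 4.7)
The overall strategy is to reduce the bound on $-F^{ij}\partial_i\partial_j v$ to an inequality obtained by differentiating \eqref{1} twice covariantly and invoking concavity of $f$ together with convexity of $A$ and $B$ in $\xi$. Applying the Leibniz rule to $v=\eta\Delta u$ gives
$$-F^{ij}\partial_i\partial_j v = -\eta\,F^{ij}\partial_i\partial_j(\Delta u) \;-\; 2F^{ij}\partial_i\eta\,\partial_j(\Delta u) \;-\; (\Delta u)\,F^{ij}\partial_i\partial_j\eta,$$
and since $F^{ij}$ is positive definite, the estimate $|F^{ij}X_iY_j|\leq\operatorname{tr}(F)\,|X||Y|$ immediately controls the latter two terms by $C\operatorname{tr}(F)\bigl(|D\eta||D\Delta u|+|D^2\eta||\Delta u|\bigr)$, matching the corresponding cutoff pieces in \eqref{36}. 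It therefore suffices to estimate $-\eta F^{ij}\partial_i\partial_j(\Delta u)$ from above.

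Next, converting Euclidean partial derivatives into covariant ones costs Christoffel corrections of the schematic form $\Gamma\,\partial(\Delta u) + \partial\Gamma\,\nabla^2 u + \text{l.o.t.}$, which after pairing with $F^{ij}$ are absorbed into $C\operatorname{tr}(F)\eta(|D\Delta u|+|\nabla^2 u|+1)$. Then the Ricci commutation identity
$$\nabla_i\nabla_j(\Delta u) = \Delta(\nabla_i\nabla_j u) + \operatorname{Riem}\ast\nabla^2 u + \nabla\!\operatorname{Riem}\ast\nabla u$$
expresses $F^{ij}\nabla_i\nabla_j(\Delta u)$ in terms of $F^{ij}\Delta(\nabla_i\nabla_j u)$ modulo a further $C\operatorname{tr}(F)\eta(|\nabla^2 u|+1)$ error. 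The task is reduced to controlling $-\eta F^{ij}\Delta(\nabla_i\nabla_j u)$.

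The core of the proof is to differentiate the equation $f(\lambda(W[u]))=B[u]$ twice. A single covariant derivative gives $F^{ij}\nabla_m W_{ij}=\nabla_m B[u]$; applying $\nabla^m$ and tracing yields
$$F^{ij}\Delta W_{ij} + F^{ij,kl}\,g_0^{mn}\nabla_m W_{ij}\,\nabla_n W_{kl} = \Delta B[u].$$
Concavity of $f$ in $W$ makes $F^{ij,kl}$ negative semidefinite on symmetric tensors, so writing $g_0^{-1}$ as a sum of positive rank-one tensors shows the middle term is nonpositive. This gives $F^{ij}\Delta W_{ij}\geq\Delta B[u]$, and substituting $W_{ij}=\nabla_i\nabla_j u - A[u]_{ij}$ and rearranging,
$$-\eta F^{ij}\Delta(\nabla_i\nabla_j u) \;\leq\; -\eta\,\Delta B[u] + \eta\,F^{ij}\Delta(A[u]_{ij}).$$

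In the final step I would expand $\Delta B[u]$ and $F^{ij}\Delta A[u]_{ij}$ via the chain rule applied to functions of $(x,u,du)$. Each produces a top-order quadratic piece, namely $B_{\xi_k\xi_l}g_0^{mn}\nabla_m\nabla_k u\,\nabla_n\nabla_l u$ and $F^{ij}(A_{\xi_k\xi_l})_{ij}g_0^{mn}\nabla_m\nabla_k u\,\nabla_n\nabla_l u$, respectively. Convexity of $B$ in $\xi$ makes the first nonnegative, so its $-\eta$-weighted contribution carries the favourable sign and can be dropped. For the second, convexity of $A$ in $\xi$ means $(A_{\xi_k\xi_l})_{ij}\zeta^i\zeta^j\eta^k\eta^l\geq 0$ for all $\zeta,\eta$; diagonalising $F^{ij}=\sum_a\mu_a e_a^i e_a^j$ with $\mu_a\geq 0$ shows that $F^{ij}(A_{\xi_k\xi_l})_{ij}$ is positive semidefinite in $(k,l)$, and its contraction with the positive semidefinite matrix $g_0^{mn}\nabla_m\nabla_k u\,\nabla_n\nabla_l u$ is therefore nonnegative, so this contribution is also discarded. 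All remaining terms carry at most one third-order derivative of $u$ (bounded by $|D\Delta u|+|\nabla^2 u|$) multiplied by bounded coefficients: those from $\Delta B[u]$ feed into the $C\eta(|D\Delta u|+|\nabla^2 u|+1)$ bin of \eqref{36}, while those from $F^{ij}\Delta A[u]_{ij}$ pick up a $\operatorname{tr}(F)$ factor and feed into the $C\operatorname{tr}(F)\eta(|D\Delta u|+|\nabla^2 u|+1)$ bin. The main obstacle is the careful bookkeeping of which error falls into which bin, together with verifying the nonnegativity of the $F$-weighted $A$-quadratic term from the tensor-convexity hypothesis on $A$.
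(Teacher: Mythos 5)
Your approach is essentially the same as the paper's: differentiate the equation twice, use concavity of $f$ to discard the second-variation term in $\nabla W$, use convexity of $A$ and $B$ in $\xi$ to discard the Hessian-quadratic terms, and bin the remaining errors. There is, however, a sign error in your central display. From $F^{ij}\Delta W_{ij}\geq\Delta B[u]$ and $W_{ij}=\nabla_i\nabla_j u-A[u]_{ij}$, rearranging and multiplying by $-\eta$ gives
\[
-\eta F^{ij}\Delta(\nabla_i\nabla_j u)\;\leq\;-\eta\,\Delta B[u]\;-\;\eta\,F^{ij}\Delta A[u]_{ij},
\]
with a \emph{minus} sign on the $A$-term, not the plus sign you wrote. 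As written, the nonnegative quadratic piece $\eta F^{ij}(A_{\xi_k\xi_l})_{ij}g_0^{mn}\nabla_m\nabla_k u\,\nabla_n\nabla_l u$ would appear on the right of an upper bound with a $+$ sign, and a nonnegative term cannot simply be dropped from that position. Your subsequent verbal reasoning (``this contribution is also discarded'') is only consistent with the corrected minus sign, so this reads as a typo rather than a conceptual gap; with the sign fixed, your argument and error-binning (including the use of tensor-convexity of $A$ and the psd contraction identity) coincide with the paper's proof, up to an innocuous reordering of the Euclidean-to-covariant conversion and the Leibniz expansion of $v=\eta\Delta u$.
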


The next step is to obtain a bound for the third order terms in \eqref{36}. We will see that such an estimate is only required to hold on $\Gamma_v^+(B_{2R})$, defined to be the upper contact set of $v$ in $B_{2R}$ with respect to the Euclidean structure:
\begin{equation*}
\Gamma_v^+(B_{2R}) = \{x\in B_{2R}:v(z) \leq v(x) + \nu\cdot (z-x) \text{ for all }z\in B_{2R}, \text{ for some }\nu\in\mathbb{R}^n\}. 
\end{equation*}
Indeed, following \cite{BCGJ03} we have:
\begin{lem}\label{50}
	On $\Gamma_v^+(B_{2R})$,
	\begin{equation}\label{6}
	\eta |D\Delta u| \leq (1+\beta)R^{-1}\eta^{-\frac{1}{\beta}}v. 
	\end{equation}
\end{lem}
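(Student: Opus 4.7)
The plan is to exploit the defining property of the upper contact set together with the fact that $v = \eta \Delta u$ vanishes continuously on $\partial B_{2R}$ (because $\eta$ does). At every $x \in \Gamma_v^+(B_{2R})$, since $v \in C^2$, any supporting hyperplane from above must have slope $\nu = Dv(x)$, so
\[
v(z) \leq v(x) + Dv(x)\cdot(z-x) \qquad \forall\, z \in B_{2R}.
\]
Taking $z$ along the ray from $x$ in the direction $-Dv(x)/|Dv(x)|$ and passing to the limit at $\partial B_{2R}$ yields $0 \leq v(x) - s\,|Dv(x)|$, where $s$ is the distance from $x$ to $\partial B_{2R}$ along that ray. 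Using the elementary bound $s \geq 2R - |x|$, I obtain
\[
|Dv(x)| \leq \frac{v(x)}{2R - |x|}.
\]
As a byproduct, $v(x) \geq 0$ on $\Gamma_v^+(B_{2R})$, hence also $\Delta u(x) \geq 0$ there.

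The next step is to replace $1/(2R - |x|)$ by the power of $\eta$ appearing in the statement. Since $\eta^{1/\beta} = 1 - |x|^2/(4R^2) = (2R-|x|)(2R+|x|)/(4R^2)$, one has $1/(2R - |x|) = (2R+|x|)\eta^{-1/\beta}/(4R^2) \leq R^{-1} \eta^{-1/\beta}$, which gives
\[
|Dv(x)| \leq R^{-1} \eta^{-1/\beta}\, v(x).
\]

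Finally, I expand $Dv = \eta\, D(\Delta u) + (\Delta u)\, D\eta$ and use the direct computation $|D\eta| = (\beta |x|/(2R^2))\,\eta^{(\beta-1)/\beta} \leq \beta R^{-1} \eta^{(\beta-1)/\beta}$ together with $\Delta u = v/\eta$ to estimate $|\Delta u|\,|D\eta| \leq \beta R^{-1} \eta^{-1/\beta}\, v(x)$. Combining with the previous display via the triangle inequality yields the claimed bound $\eta |D\Delta u| \leq (1 + \beta) R^{-1} \eta^{-1/\beta} v$. There is no serious obstacle here: the lemma is essentially a one-line consequence of the supporting hyperplane inequality for $v$ combined with the barrier scaling $\eta^{1/\beta} \sim 2R - |x|$ near the boundary, the only care being to track the precise power of $\eta$ that appears and to note that $v \geq 0$ on the upper contact set.
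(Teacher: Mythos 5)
Your proof is correct and takes essentially the same route as the paper's: both exploit the supporting hyperplane inequality at a contact point together with the vanishing of $v$ on $\partial B_{2R}$ to bound $|Dv(x)|$ by $v(x)/(2R-|x|)$, then use the identity $\eta^{1/\beta} = (2R-|x|)(2R+|x|)/(4R^2)$ (so $2R-|x| \geq R\eta^{1/\beta}$) and the product rule $Dv = \eta\,D\Delta u + \Delta u\,D\eta$ to conclude. The only cosmetic difference is that the paper picks a boundary point $z$ in the direction $-Dv(x)/|Dv(x)|$ and uses the reverse triangle inequality $|z-x|\geq 2R-|x|$, whereas you phrase it via the ray distance $s$ to the boundary; these are the same bound, and you correctly note the minor point (also needed in the paper) that $v\geq 0$ on $\Gamma_v^+(B_{2R})$ and that the case $Dv(x)=0$ is harmless.
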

After substituting \eqref{6} back into \eqref{36}, applying the ABP estimate and carrying out some routine computations, we will obtain an upper bound for $\Delta u$ on $B_R$, from which the desired estimate follows (see Remark \ref{63}). For later reference, we state the ABP estimate here in the form that we will use it:

\begin{thm}[see e.g.~{\cite[Chapter 9]{GT}}]\label{78}
	Suppose $a^{ij}$ is smooth and positive definite on a smooth bounded domain $U\subset\mathbb{R}^n$. Then there exists a constant $C=C(n)$ such that for any $\phi\in C^2(U)\cap C^0(\overline{U})$ with $\phi\equiv 0$ on $\partial U$, one has
	\begin{align*}
	\sup_U \phi \leq Cd\bigg(\int_{\Gamma_\phi^+(U)}\frac{(-a^{ij}\partial_i\partial_j\phi)^n}{\operatorname{det}(a^{ij})}\,dx\bigg)^{1/n},
	\end{align*}
	where $d$ denotes the diameter of $U$. 
\end{thm}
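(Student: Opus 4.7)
Since Theorem \ref{78} is the classical Alexandrov--Bakelman--Pucci estimate, the plan is to sketch the standard argument (as in \cite[Chapter 9]{GT}) rather than attempt a new proof. The basic strategy combines a geometric lemma about the image of the gradient map on the upper contact set with the matrix arithmetic--geometric mean inequality.

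To begin, I would observe that on $\Gamma_\phi^+(U)$, the classical Hessian $D^2\phi$ is negative semidefinite wherever it exists, since $\phi$ is touched from above by affine functions at every point of this set. Because $a^{ij}$ is positive definite and $-D^2\phi\geq 0$ on $\Gamma_\phi^+(U)$, the matrix AM--GM inequality applied to the product of two positive semidefinite matrices yields
\[
-a^{ij}\partial_i\partial_j\phi \;\geq\; n\bigl(\det(a^{ij})\det(-D^2\phi)\bigr)^{1/n} \quad\text{on }\Gamma_\phi^+(U),
\]
which rearranges to the pointwise determinant bound
\[
\det(-D^2\phi) \;\leq\; \frac{1}{n^n}\,\frac{(-a^{ij}\partial_i\partial_j\phi)^n}{\det(a^{ij})}.
\]

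Next, I would establish the key geometric lemma: writing $M=\sup_U\phi$, the image of the gradient map $D\phi$ restricted to $\Gamma_\phi^+(U)$ contains the open Euclidean ball of radius $M/d$ centred at the origin. Indeed, for any $\nu\in\mathbb{R}^n$ with $|\nu|<M/d$, one considers the affine function $x\mapsto M+\nu\cdot(x-x_0)$, where $\phi(x_0)=M$, and slides it downward until it first touches the graph of $\phi$ from above. The contact point lies in $U$, because the affine function is strictly positive on $\partial U$ while $\phi\equiv 0$ there, and by construction the contact point is in $\Gamma_\phi^+(U)$ with gradient $\nu$. Applying the area formula to $D\phi$ restricted to this set then gives
\[
\omega_n \bigl(M/d\bigr)^n \;\leq\; \bigl|D\phi\bigl(\Gamma_\phi^+(U)\bigr)\bigr| \;\leq\; \int_{\Gamma_\phi^+(U)} \bigl|\det(D^2\phi)\bigr|\,dx \;=\; \int_{\Gamma_\phi^+(U)} \det(-D^2\phi)\,dx,
\]
where $\omega_n$ denotes the volume of the unit ball in $\mathbb{R}^n$.

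Finally, I would substitute the pointwise determinant bound into this integral inequality, take $n$-th roots, and absorb dimensional constants to recover the stated estimate with $C=C(n)$. The main technical point in this outline is the covering inclusion $B_{M/d}(0)\subseteq D\phi(\Gamma_\phi^+(U))$; the $C^2$ regularity hypothesis on $\phi$ then ensures that the area formula for $D\phi$ produces the Lebesgue integral of $|\det(D^2\phi)|$ cleanly, avoiding the appeal to Alexandrov's theorem on twice-differentiability of concave envelopes that would be needed under weaker regularity hypotheses.
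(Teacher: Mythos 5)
The paper does not prove Theorem~\ref{78}; it simply records the classical Alexandrov--Bakelman--Pucci estimate and cites \cite[Chapter~9]{GT}, so there is no proof in the paper to compare against. Your sketch correctly reproduces the standard argument from that reference: negative semidefiniteness of $D^2\phi$ on the upper contact set, the matrix AM--GM inequality $\operatorname{tr}(a\,(-D^2\phi)) \geq n(\det a\,\det(-D^2\phi))^{1/n}$, the covering lemma $B_{M/d}(0)\subseteq D\phi(\Gamma_\phi^+(U))$, and the area formula for the $C^1$ map $D\phi$. One small imprecision in the sliding argument: you start with the affine function $L_0(x)=M+\nu\cdot(x-x_0)$ and say you slide it \emph{downward} until it first touches from above, but $L_0$ already touches $\phi$ at $x_0$ (and may lie below $\phi$ elsewhere), so sliding downward moves it strictly below. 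The clean version is to set $t^*=\sup_{\overline U}(\phi-L_0)\geq 0$ and take the contact plane $L_0+t^*$, i.e.\ slide the plane down from far above until first contact; your boundary exclusion computation, using $|\nu|<M/d$ and $\phi|_{\partial U}=0$, then goes through exactly as you wrote. This is a phrasing issue only, and the proof is otherwise correct.
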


We now give the proof of Lemma \ref{7}:

\begin{proof}[Proof of Lemma \ref{7}]
	
By appealing to the equation \eqref{1} and the concavity of $f$, we first observe that
\begin{equation}\label{38}
\Delta B[u] = \Delta f(\lambda(\nabla_{g_0}^2 u - A[u])) \leq F^{ij}\nabla^k\nabla_k \big(\nabla_i\nabla_j  u - A[u]_{ij}\big). 
\end{equation}
By standard formulas for commuting derivatives, the fact that $|F|_{g_0} \leq C\operatorname{tr}(F)$ (which holds since $F$ is positive definite) and the fact that we allow our constants to depend on $\|u\|_{C^1(B_{2R})}$, we have
\begin{align}
F^{ij}\nabla^k\nabla_k \nabla_i\nabla_j u \leq F^{ij}\nabla_i\nabla_j \Delta u + C\operatorname{tr}(F)|\nabla^2 u|_{g_0} 
\end{align}
and hence
\begin{align*}
\Delta B[u] \leq F^{ij}\nabla_i\nabla_j \Delta u + C\operatorname{tr}(F)|\nabla^2 u|_{g_0} - F^{ij}\Delta(A[u])_{ij}. 
\end{align*}
Therefore, using the fact that $v=\eta\Delta u$, we obtain
\begin{align}\label{25}
F^{ij}\nabla_i\nabla_j v & \geq \eta F^{ij}\Delta(A[u])_{ij} + \eta\Delta B[u] - C\eta\operatorname{tr}(F)|\nabla^2 u|_{g_0} + 2F^{ij}\nabla_i\eta\nabla_j \Delta u \nonumber \\
& \quad + \Delta u F^{ij}\nabla_i\nabla_j\eta. 
\end{align}

We now consider the term $\Delta(A[u])_{ij}$. By the chain rule,
\begin{align*}
\nabla_k \big(A[u]\big)_{ij} = \frac{\partial A_{ij}}{\partial x_k}[u] + \frac{\partial A_{ij}}{\partial z}[u]\nabla_k u + \frac{\partial A_{ij}}{\partial\xi_l}[u]\nabla_k\nabla_l u
\end{align*}
and therefore
\begin{align}\label{20}
\nabla^k&\nabla_k \big(A[u]\big)_{ij} \nonumber \\
& = \frac{\partial^2 A_{ij}}{\partial x_k^2}[u] + \frac{\partial^2 A_{ij}}{\partial z \partial x_k}[u]\nabla^k u + \frac{\partial^2 A_{ij}}{\partial\xi_l\partial x_k}[u]\nabla^k\nabla_l u \nonumber \\
& \quad + \bigg(\frac{\partial^2 A_{ij}}{\partial x_k\partial z}[u] + \frac{\partial^2 A_{ij}}{\partial z^2}[u]\nabla^k u + \frac{\partial^2 A_{ij}}{\partial\xi_l\partial z}[u]\nabla^k\nabla_l u\bigg)\nabla_k u +  \frac{\partial A_{ij}}{\partial z}[u]\Delta u \nonumber \\
& \quad + \bigg(\frac{\partial^2 A_{ij}}{\partial x_k\partial \xi_l}[u] +\frac{\partial^2 A_{ij}}{\partial z\partial \xi_l}[u]\nabla^k u + \frac{\partial^2 A_{ij}}{\partial\xi_p\partial\xi_l}[u]\nabla^k\nabla_p u\bigg)\nabla_k\nabla_l u + \frac{\partial A_{ij}}{\partial \xi_l}\nabla^k\nabla_k\nabla_l u.
\end{align}
Now, by positivity of $F$ and the assumption that $A$ is convex in the gradient variable, it holds that
\begin{align}\label{27}
F^{ij}\frac{\partial^2 A_{ij}}{\partial\xi_p\partial\xi_l}[u]\nabla^k\nabla_p u\nabla_k\nabla_l u \geq 0. 
\end{align}
After commuting derivatives in the final term in \eqref{20}, we therefore obtain from \eqref{20} and \eqref{27} the estimate
\begin{align}\label{30}
F^{ij}\Delta(A[u])_{ij} \geq -C\operatorname{tr}(F)\Big(|\nabla \Delta u|_{g_0} + |\nabla^2 u|_{g_0} + 1\Big),
\end{align}
where $C$ now depends on the $C^{2}$ norm of $A$. By identical reasoning, the assumption that $B$ is convex with respect to the gradient variable also implies
\begin{equation}\label{32}
\Delta B[u] \geq -C\Big(|\nabla\Delta u|_{g_0} + |\nabla^2 u|_{g_0} + 1\Big).
\end{equation}

Substituting \eqref{30} and \eqref{32} back into \eqref{25}, we therefore see that
\begin{align}\label{33}
-F^{ij}\nabla_i\nabla_j v & \leq C\operatorname{tr}(F)\bigg(\eta\Big(|\nabla\Delta u|_{g_0} + |\nabla^2 u|_{g_0} + 1 \Big) + |\nabla \eta|_{g_0}|\nabla\Delta u|_{g_0} + |\nabla^2\eta|_{g_0}|\Delta u|\bigg) \nonumber \\
& \qquad + C\eta \Big(|\nabla\Delta u|_{g_0} + |\nabla^2 u|_{g_0} + 1 \Big). 
\end{align} 
Now, given any function $\phi$ on $B_{2R}$, we have
\begin{align*}
|\nabla \phi|_{g_0}^2 = g_0^{ij}\partial_i\phi\partial_j\phi \leq C(g_0)\delta^{ij}\partial_i\phi \partial_j\phi = C(g_0)|D\phi|^2,
\end{align*}
\begin{align*}
|\nabla^2\phi|_{g_0}^2 & = g_0^{ia}g_0^{jb}\nabla_a\nabla_b \phi\nabla_i\nabla_j\phi \nonumber \\
& = g_0^{ia}g_0^{jb}\big(\partial_a\partial_b\phi - \Gamma_{ab}^c\partial_c\phi\big)\big(\partial_i\partial_j\phi - \Gamma_{ij}^k\partial_k\phi\big) \nonumber \\
& \leq C(g_0)\big(|D^2\phi|^2+ |D \phi|^2\big),
\end{align*}
and likewise
\begin{align*}
|\nabla^2\phi|_{g_0}^2 \leq C(g_0)|\nabla^2 \phi|^2,
\end{align*}
where $\Gamma_{ij}^k$ are the Christoffel symbols of the metric $g_0$. Use these three estimates in \eqref{33} then yields
\begin{align}\label{91}
-F^{ij}\nabla_i\nabla_j v & \leq C\operatorname{tr}(F)\bigg(\eta\Big(|D\Delta u|+ |\nabla^2 u| + 1 \Big) + |D \eta||D\Delta u|  + \big(|D^2\eta| + |D \eta|\big)|\Delta u|\bigg) \nonumber \\
& \qquad + C\eta \Big(|D\Delta u|+ |\nabla^2 u| + 1 \Big).
\end{align}
The desired estimate \eqref{36} then follows from \eqref{91} after observing
\begin{align*}
F^{ij}\nabla_i\nabla_j v  & = F^{ij}\big(\partial_i\partial_j v - \Gamma_{ij}^k\partial_k v\big) \nonumber \\
&  \leq F^{ij}\partial_i\partial_j v + C\operatorname{tr}(F)|D v| \nonumber \\
& \leq F^{ij}\partial_i\partial_j v + C\operatorname{tr}(F)\big(\eta|D\Delta u| + |D\eta||\Delta u|\big).
\end{align*}
\end{proof} 

For later reference, we note here the following estimates on the derivatives of $\eta$ (which follow immediately from the definition of $\eta$):
\begin{equation}\label{10}
|D\eta| \leq CR^{-1}\eta^{1-\frac{1}{\beta}} \quad\text{and}\quad |D^2 \eta| \leq CR^{-2}\eta^{1-\frac{2}{\beta}}.
\end{equation}

\noindent We now give the proof of Lemma \ref{50}, which is essentially the same as that given in \cite{BCGJ03} in the Euclidean setting:

\begin{proof}[Proof of Lemma \ref{50}]
	For $x\in\Gamma_v^+(B_{2R})$ such that $Dv(x)\not=0$, let $z\in \partial B_{2R}$ be such that
	\begin{equation*}
	\frac{z-x}{|z-x|}  = - \frac{D v(x)}{|D v(x)|}. 
	\end{equation*}
	Since $v=0$ on $\partial B_{2R}$ and $|z-x| \geq |z| -|x| = 2R - |x| \geq R\eta^\frac{1}{\beta}$ (the last inequality following from the definition of $\eta$), we thus have for such points $x\in \Gamma_v^+(B_{2R})$ that 
	\begin{equation}\label{t49}
	v(x) \geq v(z) -  Dv(x)\cdot(z-x)  = - D v(x)\cdot(z-x)\ = |z-x||D v(x)|\geq R\eta^{\frac{1}{\beta}}|D v(x)|,
	\end{equation}
	where $\cdot$ denotes the Euclidean inner product. At such points $x\in\Gamma_v^+(B_{2R})$ we therefore have
	\begin{align}\label{e5}
	\eta|D\Delta u|  = |D v - \Delta uD \eta|  \leq |D v|+ \Delta u|D \eta| \,\, \leftstackrel{\eqref{t49}}{\leq} \frac{v}{R\eta^\frac{1}{\beta}} + \frac{v}{\eta} \frac{\beta}{R}\eta^{1-\frac{1}{\beta}}  = \frac{(1+\beta)v}{R\eta^{\frac{1}{\beta}}}.
	\end{align}
	Note that at points $x\in\Gamma_v^+(B_{2R})$ where $Dv(x)=0$, it is clear that \eqref{e5} still holds. 
\end{proof}

We now complete the proof of Case 1 in Theorem \ref{A}:

\begin{proof}[Proof of Case 1 in Theorem \ref{A}]
	As explained at the start of Section 2, it suffices to consider the case that $\Omega = B_{2R}$ is a Euclidean ball of radius $2R$ centred at the origin and $\Omega' = B_R$. 
	
	Since $\Gamma\subseteq(\Gamma_2^+)^\tau$ for some $\tau\in(0,1]$, there exists a constant $C$ depending on $\|u\|_{C^1(B_{2R})}$ such that $|\nabla^2 u| \leq C(|\Delta u|+1)$ (see Remark \ref{63}). Using this fact after substituting the estimate \eqref{6} of Lemma \ref{50} back into the estimate \eqref{36} of Lemma \ref{7}, we therefore obtain on $\Gamma_v^+(B_{2R})$ 
	\begin{align}\label{11}
	0 \leq -F^{ij}\partial_i\partial_j v & \leq C\operatorname{tr}(F)\bigg(\frac{(1+\beta)v}{R\eta^{1/\beta}} + v + \eta + \frac{(1+\beta)v}{R\eta^{1+\frac{1}{\beta}}}|D\eta| + \frac{|D^2\eta|+ |D\eta|}{\eta}v\bigg) \nonumber \\
	& \qquad + C\bigg(\frac{(1+\beta)v}{R\eta^{1/\beta}} + v + \eta\bigg). 
	\end{align}
	Note we have used $D^2 v \leq 0$ on $\Gamma_v^+(B_{2R})$ to assert $0 \leq -F^{ij}\partial_i\partial_j v $. Next we appeal to the estimates in \eqref{10} for $|D\eta|$ and $|D^2\eta|$, which when substituted into \eqref{11} yields
	\begin{align}\label{42}
	0 \leq -F^{ij}\partial_i\partial_j v & \leq C\operatorname{tr}(F)\bigg(\frac{v}{R\eta^{\frac{1}{\beta}}} + v + \eta + \frac{v}{R^2 \eta^{\frac{2}{\beta}}} \bigg) + C\bigg(\frac{v}{R\eta^{\frac{1}{\beta}}} + v + \eta \bigg) 
	\end{align}
	on $\Gamma_v^+(B_{2R})$. We now proceed in a similar way to \cite{D22}. By \eqref{42} and the assumption \eqref{26}, we have on $\Gamma_v^+(B_{2R})$
\begin{align}\label{e9}
0   \leq \frac{-F^{ij}\partial_i\partial_j v}{(\det F^{ij})^{1/n}} & \leq \frac{C\operatorname{tr}(F)}{(\operatorname{det}F^{ij})^{1/n}}\bigg(\frac{v}{R\eta^{\frac{1}{\beta}}} +  v +  \eta +   \frac{v}{R^2\eta^{\frac{2}{\beta}}}\bigg) + \frac{C}{(\operatorname{det}F^{ij})^{1/n}}\bigg(\frac{v}{R\eta^{\frac{1}{\beta}}} +  v +  \eta\bigg)\nonumber \\
&  \leq C\bigg(\frac{\sigma_1(\lambda(\nabla^2 u - A[u]))}{f(\lambda(\nabla^2 u - A[u]))}\bigg)^{\gamma/n}\bigg( \frac{v}{R\eta^{\frac{1}{\beta}}} + v + \eta  +  \frac{v}{R^2\eta^{\frac{2}{\beta}}}\bigg) \nonumber \\
& \quad + C\bigg(\frac{\sigma_1(\lambda(\nabla^2 u - A[u]))}{f(\lambda(\nabla^2 u - A[u]))}\bigg)^{\gamma/n}\frac{1}{\operatorname{tr}(F)}\bigg(\frac{v}{R\eta^{\frac{1}{\beta}}} +  v +  \eta\bigg).
\end{align}
Now, since $B[u]$ and $\operatorname{tr}(F(A))= \sum_i f_{\lambda_i}(\lambda(A)) = f(\lambda(A)) +\sum_i f_{\lambda_i}(\lambda(A))(1-\lambda_i) \geq f(1,\dots,1)$ are both bounded away from zero, and since $\sigma_1(\lambda(\nabla^2 u - A[u])) \leq \Delta u + C$, \eqref{e9} implies (after using the equation \eqref{1}) that 
\begin{align}\label{e10}
0   \leq \frac{-F^{ij}\partial_i\partial_j v}{(\det F^{ij})^{1/n}} \leq C\bigg( \frac{v}{R\eta^{\frac{1}{\beta}}} + v + \eta  +  \frac{v}{R^2\eta^{\frac{2}{\beta}}}\bigg)(\Delta u + C)^{\gamma/n} \quad\text{on }\Gamma_v^+(B_{2R}).
\end{align}

We next apply the ABP estimate as stated in Theorem \ref{78}, which in combination with \eqref{e10} yields 
\begin{align}\label{2-}
&\sup_{B_{2R}} v  \leq CR\bigg(\int_{\Gamma_v^+(B_{2R})}\frac{(-F^{ij}\partial_i\partial_j v)^n}{\operatorname{det}(F^{ij})}\,dx\bigg)^{1/n} \nonumber \\
& \leq C\bigg(\int_{\Gamma_v^+(B_{2R})}(\eta^{-\frac{1}{\beta}}v)^n(\Delta u + C)^{\gamma}\,dv_{g_0}\bigg)^{1/n} +CR\bigg(\int_{\Gamma_v^+(B_{2R})} v^n(\Delta u + C)^{\gamma}\,dv_{g_0}\bigg)^{1/n} \nonumber \\
& \quad  +CR\bigg(\int_{\Gamma_v^+(B_{2R})} \eta^n(\Delta u + C)^{\gamma}\,dv_{g_0}\bigg)^{1/n}+ CR^{-1}\bigg(\int_{\Gamma_v^+(B_{2R})}(\eta^{-\frac{2}{\beta}}v)^n(\Delta u + C)^{\gamma}\,dv_{g_0}\bigg)^{1/n}.
\end{align}
Note that we have used $dx = (\operatorname{det}((g_0)_{ij}))^{-1/2}\,dv_{g_0}$ and the fact that $(\operatorname{det}((g_0)_{ij}))^{-1/2}$ is bounded by a constant depending only on $g_0$ in order to replace $dx$ with $dv_{g_0}$ in \eqref{2-}. For the remainder of the proof, all integrals are implicitly assumed to be with respect to $dv_{g_0}$. 

We estimate each of the four integrals on the RHS of \eqref{2-} in turn, starting with the last one. Writing $\eta^{-\frac{2}{\beta}}v = v^{1-\frac{2}{\beta}}(\Delta u)^{\frac{2}{\beta}}$, and noting that $1-\frac{2}{\beta}>0$ (since $\beta>2$), we see
\begin{align*}
\bigg(\int_{\Gamma_v^+(B_{2R})}\!(\eta^{-\frac{2}{\beta}}v)^n(\Delta u+C)^{\gamma}\!\bigg)^{1/n} \!\leq (\sup_{B_{2R}}v)^{1-\frac{2}{\beta}}\bigg(\int_{B_{2R}}|\Delta u|^{\frac{2n}{\beta}}(\Delta u+C)^{\gamma}\!\bigg)^{1/n}\!\!,
\end{align*}
where we have assumed $\sup_{B_{2R}} v \geq 0$ (otherwise we are done). For the first term on the RHS of \eqref{2-}, note $\eta^{-\frac{1}{\beta}}v = v^{1-\frac{2}{\beta}}(\sqrt{\eta}|\Delta u|)^{\frac{2}{\beta}} \leq v^{1-\frac{2}{\beta}}|\Delta u|^{\frac{2}{\beta}}$, so
\begin{align*}
\bigg(\int_{\Gamma_v^+(B_{2R})}(\eta^{-\frac{1}{\beta}}v)^n(\Delta u+C)^{\gamma}\bigg)^{1/n} \leq (\sup_{B_{2R}}v)^{1-\frac{2}{\beta}}\bigg(\int_{B_{2R}}|\Delta u|^{\frac{2n}{\beta}}(\Delta u+C)^{\gamma}\bigg)^{1/n}.
\end{align*}
Similarly, for the second term, $v=v^{1-\frac{2}{\beta}}(\eta|\Delta u|)^{\frac{2}{\beta}} \leq v^{1-\frac{2}{\beta}}|\Delta u|^{\frac{2}{\beta}}$, so 
\begin{align*}
\bigg(\int_{\Gamma_v^+(B_{2R})} v^n(\Delta u+C)^{\gamma}\bigg)^{1/n}  \leq (\sup_{B_{2R}}v)^{1-\frac{2}{\beta}}\bigg(\int_{B_{2R}}|\Delta u|^{\frac{2n}{\beta}}(\Delta u+C)^{\gamma}\bigg)^{1/n},
\end{align*}
and for the third term on the RHS of \eqref{2-} it is easy to see that
\begin{equation*}
\bigg(\int_{\Gamma_v^+(B_{2R})} \eta^n(\Delta u+C)^{\gamma}\bigg)^{1/n} \leq \bigg(\int_{B_{2R}} (\Delta u+C)^{\gamma}\bigg)^{1/n} \leq C. 
\end{equation*}

Substituting the previous four estimates into \eqref{2-} and using the fact that $R$ is bounded, we therefore obtain
\begin{align}\label{30'}
\sup_{B_{2R}} v \leq CR^{-1}(\sup_{B_{2R}}v)^{1-\frac{2}{\beta}}\bigg(\int_{B_{2R}}|\Delta u|^{\frac{2n}{\beta}}(\Delta u+C)^{\gamma}\bigg)^{1/n} + CR^{-1}.
\end{align}

We now choose $\beta = \frac{2n}{p-\gamma}$, where $p$ is as in the statement of Case 1 in Theorem \ref{A}, so that $\frac{2n}{\beta} = p-\gamma$. After applying H\"older's inequality to the integral on the RHS of \eqref{30'} and dividing through by $(\sup_{B_{2R}}v)^{1-\frac{2}{\beta}}$, we obtain the estimate
\begin{align}\label{43}
(\sup_{B_{2R}} v)^{2/\beta}\leq CR^{-1}\|\Delta u+C \|_{L^p(B_{2R})}^{p/n} + \frac{CR^{-1}}{(\sup_{B_{2R}} v)^{1-\frac{2}{\beta}}}.
\end{align}
If $\sup_{B_{2R}} v\leq 1$ then we are done. Supposing otherwise, \eqref{43} then implies
\begin{align*}
(\sup_{B_{2R}} v)^{2/\beta}\leq CR^{-1}\big(1+\|\Delta u +C\|_{L^p(B_{2R})}^{p/n} \big),
\end{align*}
and we therefore arrive at the estimate 
\begin{equation*}
\sup_{B_R} \Delta u \leq CR^{-\beta/2}\big(1+\|\Delta u + C \|_{L^p(B_{2R})}^{p/n} \big)^{\beta/2}.
\end{equation*}
As explained in Remark \ref{63}, the estimate for $\|\nabla^2 u\|_{L^\infty(B_{R})}$  then follows. 
\end{proof}

\subsection{Proof of Case 2 in Theorem \ref{A}}\label{5}

~\medskip 

\noindent We now prove the second case in Theorem \ref{A}, which requires only minor changes to the proof in the previous section. As before, $B_{2R}$ denotes the Euclidean ball of radius $2R$ centred at the origin. We first observe the following counterpart to Lemma \ref{7}:

\begin{lem}\label{7'}
	Let $g_0$ be a Riemannian metric on $\Omega=B_{2R}$ and suppose $f,\Gamma, A$ and $B$ are as in Case 2 of Theorem \ref{A} (with $(f,\Gamma)$ not necessarily satisfying \eqref{26}). Then for any solution $u\in C^4(B_{2R})$ to \eqref{1}, it holds that 
	\begin{align}\label{36'}
	-F^{ij}\partial_i\partial_j v & \leq C\operatorname{tr}(F)\bigg(  \eta\Big(|D\Delta u| + |\nabla^2 u|^2 + 1 \Big) + |D \eta||D\Delta u|   + \big(|D^2\eta| + |D \eta|\big)|\Delta u|\bigg) \nonumber \\
	& \qquad + C\eta\Big(|D\Delta u|+ |\nabla^2 u|^2 + 1 \Big).
	\end{align}
\end{lem}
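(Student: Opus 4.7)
The proof of Lemma \ref{7'} follows the same strategy as Lemma \ref{7}, with the only essential change occurring at the two places where convexity of $A$ and $B$ in $\xi$ was used. The plan is to simply replay the computation and accept a quadratic (rather than linear) dependence on $|\nabla^2 u|$ at those points.

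First, I would reproduce the derivation up through \eqref{25} without change, since that step uses only the equation \eqref{1}, concavity of $f$, and commutator formulas — none of which involves a convexity hypothesis on $A$ or $B$. The next step is the estimation of $F^{ij}\Delta(A[u])_{ij}$ via the chain-rule expansion \eqref{20}. In the proof of Lemma \ref{7}, the term
\begin{equation*}
F^{ij}\frac{\partial^2 A_{ij}}{\partial\xi_p\partial\xi_l}[u]\,\nabla^k\nabla_p u\,\nabla_k\nabla_l u
\end{equation*}
was dropped by convexity (see \eqref{27}). Without convexity, I would instead estimate this term in absolute value: since $F$ is positive definite we have $|F|_{g_0}\leq C\operatorname{tr}(F)$, while $\partial^2_{\xi\xi}A$ is bounded in terms of the $C^2$ norm of $A$ (which the constants are permitted to depend on), so the term is bounded below by $-C\operatorname{tr}(F)|\nabla^2 u|_{g_0}^2$. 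All other contributions from \eqref{20} carry linear or sub-linear dependence on $|\nabla^2 u|_{g_0}$ and $|\nabla\Delta u|_{g_0}$, exactly as before. This upgrades \eqref{30} to
\begin{equation*}
F^{ij}\Delta(A[u])_{ij} \geq -C\operatorname{tr}(F)\Big(|\nabla\Delta u|_{g_0} + |\nabla^2 u|_{g_0}^2 + 1\Big).
\end{equation*}

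Next I would repeat the same reasoning to bound $\Delta B[u]$ from below. The only place convexity of $B$ was used in Lemma \ref{7} was to discard a nonnegative quadratic term $\partial^2_{\xi_p\xi_l}B\,\nabla^k\nabla_p u\,\nabla_k\nabla_l u$; without convexity, $C^2$-boundedness of $B$ yields the modified estimate
\begin{equation*}
\Delta B[u] \geq -C\Big(|\nabla\Delta u|_{g_0} + |\nabla^2 u|_{g_0}^2 + 1\Big).
\end{equation*}
Substituting these two modified inequalities into \eqref{25} in place of \eqref{30} and \eqref{32}, and then running the remainder of the argument verbatim — converting covariant derivatives and norms to Euclidean ones via the same three comparison estimates and absorbing the Christoffel symbol term in the same way — produces \eqref{36'} with $|\nabla^2 u|_{g_0}^2$ in place of $|\nabla^2 u|_{g_0}$ in both parentheses on the right-hand side.

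There is no real obstacle here: the argument is mechanical once one identifies that the only use of convexity in the proof of Lemma \ref{7} is to discard the single quadratic-in-Hessian term appearing in the $\xi\xi$-derivative of $A$ (and analogously for $B$). The cost of losing that cancellation is merely the replacement of $|\nabla^2 u|$ by $|\nabla^2 u|^2$. The quadratic term will, of course, make the subsequent ABP-based argument in Section \ref{5} more demanding — one expects this to be where the stronger hypothesis $p>n+\gamma$ (rather than $p>\gamma$) enters, since an additional factor of $|\Delta u|$ must be absorbed when raising the integrand to the $n$-th power and applying H\"older — but that step lies outside the statement of Lemma \ref{7'} itself.
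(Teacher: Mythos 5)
Your proposal is correct and follows essentially the same approach as the paper: you correctly identify that convexity of $A$ and $B$ was used only to drop the single quadratic term $F^{ij}\partial^2_{\xi_p\xi_l}A_{ij}[u]\nabla^k\nabla_p u\nabla_k\nabla_l u$ (and its analogue for $B$), and that bounding it by $-C\operatorname{tr}(F)|\nabla^2 u|_{g_0}^2$ (resp.\ $-C|\nabla^2 u|_{g_0}^2$) upgrades \eqref{30} and \eqref{32} to the quadratic versions, after which the rest of the proof of Lemma \ref{7} applies verbatim. This matches the paper's proof.
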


\begin{rmk}
	We point out that the only difference between \eqref{36'} and \eqref{36} is in the exponent of the $|\nabla^2 u|$ terms. 
\end{rmk}

\begin{proof}
	The computations \eqref{38}--\eqref{20} in the proof of Lemma \ref{7} still apply under the hypotheses of Lemma \ref{7'}. However, the penultimate term in \eqref{20} can no longer be dropped (since \eqref{27} is no longer necessarily true), and hence \eqref{30} is replaced by the weaker estimate
	\begin{align*}
	F^{ij}\Delta(A[u])_{ij} \geq -C\operatorname{tr}(F)\Big(|\nabla \Delta u|_{g_0} + |\nabla^2 u|^2_{g_0} + 1\Big).
	\end{align*}
	Likewise, \eqref{32} is replaced by the weaker estimate
	\begin{equation}\label{92}
	\Delta B[u] \geq -C\Big(|\nabla\Delta u|_{g_0} + |\nabla^2 u|^2_{g_0} + 1\Big).
	\end{equation}
	The proof of \eqref{36'} then proceeds exactly as in the proof of Lemma \ref{7}. 
\end{proof}

\begin{proof}[Proof of Case 2 in Theorem \ref{A}]
	In the same way that we obtained \eqref{42} from \eqref{36}, we obtain from \eqref{36'} the estimate 
	\begin{align}\label{42'}
	0 \leq -F^{ij}\partial_i\partial_j v & \leq C\operatorname{tr}(F)\bigg(\frac{v}{R\eta^{\frac{1}{\beta}}} + v|\Delta u| + \eta + \frac{v}{R^2 \eta^{\frac{2}{\beta}}} \bigg) + C\bigg(\frac{v}{R\eta^{\frac{1}{\beta}}} + v|\Delta u| + \eta \bigg)
	\end{align}
	on $\Gamma_v^+(B_{2R})$. We point out that the only difference between \eqref{42'} and \eqref{42} are the $v|\Delta u|$ terms. Keeping track of these terms, we see that the estimate \eqref{2-} remains the same except that the second term on the middle line should be replaced with
	\begin{equation}\label{71}
	CR\bigg(\int_{\Gamma_v^+(B_{2R})}(v|\Delta u|)^n(\Delta u + C)^\gamma\bigg)^{1/n}. 
	\end{equation}
	Assuming once again that $\sup_{B_{2R}}v \geq 0$ (otherwise we are done), we see that the expression in \eqref{71} is bounded from above by 
	\begin{equation*}
	CR(\sup_{B_{2R}})^{1-\frac{2}{\beta}}\bigg(\int_{B_{2R}}|\Delta u|^{n+\frac{2n}{\beta}}(\Delta u + C)^\gamma\bigg)^{1/n}.
	\end{equation*}
	The proof then proceeds in exactly the same way to Case 1, now choosing $\beta = \frac{2n}{p-n -\gamma}$ where $p$ is as in the statement of Case 2. 
	\end{proof}

\subsection{The case $(f,\Gamma) = ((\sigma_2^{1/2})^\tau, (\Gamma_2^+)^\tau)$}\label{88}

~\medskip 

\noindent To complete the proof of Theorem \ref{A}, it remains to show that the convexity assumption on $B$ in Case 1 can be dropped if one assumes $(f,\Gamma) = ((\sigma_2^{1/2})^\tau, (\Gamma_2^+)^\tau)$ for some $\tau\in(0,1]$. 

\begin{proof}[Proof of the last statement in Theorem \ref{A}]
It suffices to show that Lemma \ref{7} still holds, since this is the only place in the proof of Case 1 where a convexity assumption on $B$ is used. It is well-known that the linearisation of the $\sigma_2$-operator is given by the first Newton tensor, that is
\begin{equation}\label{95}
\frac{\partial \sigma_2^{1/2}(\lambda(X))}{\partial X^i_j} = \frac{1}{2\sigma_2^{1/2}(\lambda(X))}\frac{\partial \sigma_2(\lambda(X))}{\partial X^i_j} = \frac{1}{2\sigma_2^{1/2}(\lambda(X))}\big(\sigma_1(\lambda(X))\delta_i^j - X_i^j\big). 
\end{equation}
Therefore, in the case that $u$ is a solution to \eqref{1} with $(f,\Gamma) = ((\sigma_2^{1/2})^\tau, (\Gamma_2^+)^\tau)$ for some $\tau\in(0,1]$, 
\begin{align*}
\operatorname{tr}(F) & = \frac{n-1}{2B[u]}\operatorname{tr}\big[\tau (\nabla^2 u - A[u]) + (1-\tau)\operatorname{tr}(\nabla^2 u - A[u])\operatorname{Id}\big] > C^{-1}(\Delta u - \operatorname{tr}(A[u])). 
\end{align*}
Since $|\nabla^2 u|_{g_0} \leq C(\Delta u + 1)$, it follows that $|\nabla^2 u|_{g_0} \leq C(\operatorname{tr}(F) + 1)$. Substituting this into the estimate \eqref{92}, we therefore see that
\begin{align}\label{93}
\Delta B[u] \geq - C\Big(|\nabla\Delta u|_{g_0} + |\nabla^2 u|_{g_0} + 1\Big) - C\operatorname{tr}(F)|\nabla^2 u|_{g_0}.
\end{align}
With \eqref{93} in place of \eqref{30}, one still obtains the estimate \eqref{33}, and the proof of Lemma \ref{7} then proceeds as before. 
\end{proof}

\section{Proof of Theorem \ref{61}}\label{90}

\noindent In this short section we prove Theorem \ref{61}, which states that we can remove the dependence on any $L^p$ norm of $\nabla_{g_0}^2 u$ when $A$ and $B$ are convex in the gradient variable and $(1,0,\dots,0)\in\Gamma$. 

\begin{proof}[Proof of Theorem \ref{61}]
By a standard covering argument, it suffices to prove Theorem \ref{61} in the case that $\Omega = B_{3R}$ is a sufficiently small geodesic ball and $\Omega' = B_{R}$. First note that by Case 1 of Theorem \ref{A}, under our current hypotheses we have
\begin{equation}\label{94}
\|\nabla_{g_0}^2 u\|_{L^\infty(B_R)} \leq C
\end{equation}
where $C$ depends on an upper bound $\|u\|_{C^1(B_{2R})} + \|\nabla_{g_0}^2 u\|_{L^1(B_{2R})}$ and other given data.

Now suppose that $R$ is sufficiently small so that there exists a strictly convex function $\phi$ on $B_{3R}$. In light of the second sentence in Remark \ref{63}, $u + C_1\phi$ is then subharmonic on $B_{3R}$ for a sufficiently large constant $C_1$ depending on $g_0$ and an upper bound for $\|u\|_{C^1(B_{3R})}$. To prove the theorem, by \eqref{94} it suffices to obtain an estimate for $ \|\nabla_{g_0}^2 u\|_{L^1(B_{2R})}$ in terms of $\|u\|_{C^1(B_{3R})}$ and other given data on $B_{3R}$. 

To this end, note that if $\psi$ is any subharmonic function on $B_{3R}$, then $\Delta_{g_0}\psi$ satisfies an $L^1$ estimate on $B_{2R}$ in terms of $\|\psi\|_{L^1(B_{3R})}$. Indeed, taking $\eta\in C^\infty_c(B_{3R})$ to be a nonnegative cutoff function satisfying $\eta\equiv 1$ on $B_{2R}$, $|\nabla_{g_0}\eta|\leq CR^{-1}$ and $|\nabla_{g_0}^2 u|\leq CR^{-2}$ on $B_{3R}$, we have
	\begin{align*}
	\int_{B_{2R}}|\Delta_{g_0}\psi|\,dv_{g_0} = \int_{B_{2R}} \Delta_{g_0}\psi\,dv_{g_0} \leq \int_{B_{3R}}\eta\Delta_{g_0} \psi\,dv_{g_0} & = \int_{B_{3R}}\psi\Delta_{g_0}\eta\,dv_{g_0} \nonumber \\
	& \leq CR^{-2}\|\psi\|_{L^1(B_{3R})}.
	\end{align*}
	Taking $\psi = u + C_1\phi$ and recalling the first sentence in Remark \ref{63}, we therefore see that $\nabla_{g_0}^2 u$ satisfies the desired $L^1$ estimate on $B_{2R}$.
\end{proof}

\begin{rmk}
	We note that the local second derivative estimate of Theorem \ref{61} has previously been obtained for certain choices of $A$ and $B$ using Pogorelov-type arguments. To provide one notable example, Theorem \ref{61} yields local second derivative estimates for the trace-modified Hessian equation 
	\begin{align}\label{86}
	(\sigma_k^{1/k})^\tau\big(\lambda(\nabla_{g_0}^2 u)\big) = f(x,u)>0, \quad \lambda(\nabla_{g_0}^2 u)\in(\Gamma_k^+)^\tau, \quad \tau\in(0,1),
	\end{align}
	which were also obtained using pointwise methods by Guan in \cite[Theorem 3.1]{Guan08}, therein taking $t=s=0$ and $A=0$. In contrast, such estimates for $k$-convex solutions to the $k$-Hessian equation ($t=1$) remain unknown for $k=2$ when $n\geq 4$, and fail for $k\geq 3$ \cite{Pog78, Urb90}. Local second derivative estimates in the Euclidean setting are known for $k=2$ when $n=2$ \cite{Hei59} and $n=3$ \cite{WY09} (see also \cite{Qiu17}). For related work under additional convexity assumptions, see for example \cite{GQ19, MSY19, SY20}.
\end{rmk}

\section{Proof of Theorem \ref{B}}\label{77}

In this section we prove Theorem \ref{B} using an integrability improvement argument followed by Moser iteration. Since our desired estimates are local, it suffices to consider the case that $\Omega\subset M^n$ is a geodesic ball of radius $2R$ contained inside a single coordinate chart, and $\Omega'$ is the concentric geodesic ball with radius $R$. Our equation of interest is therefore
\begin{align}\label{28}
\sigma_2^{1/2}\big(\lambda\big(W[u]\big)\big) = B[u], \quad \lambda\big(W[u]\big)\in\Gamma_2^+ \quad\text{on }B_{2R},
\end{align}
where $W[u] = \nabla^2 u - A[u]$, $A[u](x) = A(x,u(x),du(x))$ defines a symmetric $(0,2)$-tensor at each $x\in B_{2R}$ and $B[u](x) = B(x,u(x),du(x))$ is real and positive. We make no other assumptions on either $A$ or $B$, and we work in arbitrary dimension, although we reiterate that the bound $p>3n/2$ in Theorem \ref{B} only improves on Theorem \ref{A} when $n=2$ or 3.

We will often use the following basic estimates without explicit reference. First, we recall that the assumption $\lambda(W[u])\in\Gamma_2^+$ implies the existence of a constant $\alpha>0$ (depending on $A$ and an upper bound for $ \|u\|_{C^1(B_{2R})}$) for which 
\begin{equation}\label{-15}
0\leq |\nabla^2 u| <\Delta u + \alpha
\end{equation}
and
\begin{equation*}
0<\operatorname{tr}(W[u]) \leq \Delta u + \alpha
\end{equation*}
in $B_{2R}$. In addition, we also assume that $\alpha$ is chosen such that $\Delta u + \alpha \geq 1$. We denote by $F_i^{j}$ the linearisation of $\sigma_2$, which we recall is equal to the first Newton tensor:
\begin{equation}\label{-12}
F_i^{j} = \frac{\partial\sigma_2(\lambda(W[u]))}{\partial(W[u])_{j}^i} = \sigma_1(\lambda(W[u]))\delta_i^{j} - W[u]_i^{j}. 
\end{equation}
It is easy to see from \eqref{-15} and \eqref{-12} that $|F| \leq C(\Delta u+\alpha)$ in $B_{2R}$. 

\begin{rmk}
	We point out that in this section, $F$ denotes the linearisation of $\sigma_2$ rather than the linearisation of $\sigma_2^{1/2}$ (cf.~\eqref{95}). The reason for this change is due to the favourable divergence structure of \eqref{-12}.
\end{rmk}

The plan of the section is as follows. In Section \ref{80} we carry out a series of integral estimates with a view to obtaining the estimate \eqref{81} below. In Section \ref{t50}, we show that under the assumptions of Theorem \ref{B}, the estimate \eqref{81} implies an estimate of reverse H\"older type, which can then be iterated to yield the desired pointwise estimate.  In this section, all computations will be carried out with respect to the background metric $g_0$, and all integrals will be with respect to the volume form of $g_0$.

\subsection{Integral estimates}\label{80}

~\medskip 

\noindent Our starting point in the proof of Theorem \ref{B} is the the following pointwise estimate for solutions $u$ to \eqref{28}, which follows from the same argument as in the start of the proof of Lemma 2.1:
\begin{equation}\label{-11}
2B\Delta B[u]  \leq F^{ij} \nabla_i\nabla_j \Delta u  -\ F^{ij}\Delta A[u]_{ij}+ C(\Delta u + \alpha)^2\quad\mathrm{on~}B_{2R}.
\end{equation}

For $\rho\in(0,\frac{R}{2}]$ we now let $\eta\in C_0^\infty(B_{R+2\rho})$ be a non-negative cutoff function satisfying $0\leq \eta \leq 1$, $\eta=1$ on $B_{R+\rho}$ and $|\nabla^l\eta|\leq C(n)\rho^{-l}$ for $l=1,2$. Multiplying both sides of \eqref{-11} by $\eta(\Delta u + \alpha)^{q-1}$ (where $q>1$ is to be determined) and integrating over $B_{R+2\rho}$, we obtain the integral estimate
\begin{align}\label{-27}
2\int_{B_{R+2\rho}} \eta&(\Delta u +\alpha)^{q-1} B\Delta B[u]
\nonumber \\
& \leq \int_{B_{R+2\rho}} \eta(\Delta u+\alpha)^{q-1} F^{ij} \nabla_i\nabla_j \Delta u  -\int_{B_{R+2\rho}}\eta(\Delta u+\alpha)^{q-1} F^{ij}\Delta A[u]_{ij} \nonumber \\
& \quad + C\int_{B_{R+2\rho}}(\Delta u + \alpha)^{q+1}.
\end{align} 
Note that here and henceforth, $C$ is a constant depending on $n,g_0, A, B, R$ and an upper bound for $\|u\|_{C^1(B_{2R})}$, but $C$ will remain independent of the size of $q$ and $\rho^{-1}$ (this will be important for the iteration argument in Section \ref{t50}). We do not allow $C$ to depend on any norms of second derivatives of $u$, as the dependence on $\|\nabla_{g_0}^2 u\|_{L^p(B_{2R},g_0)}$ in our final estimates will be explicit. $C$ may continue to change from line to line, and we continue to implicitly raise and lower indices using the background metric $g_0$ and its inverse. At various points in our argument, we will implicitly use the fact that $\Delta u + \alpha \geq 1$. \medskip

Our main goal in this section is to prove the following proposition: 

\begin{prop}\label{17}
	Suppose $q>1$, $B\in C^2_{\operatorname{loc}}(B_{2R}\times\mathbb{R}\times T^*B_{2R})$ is positive and $u\in C^4(B_{2R})$ is a solution to \eqref{28}. Then 
	\begin{equation}\label{81}
	\int_{B_{R+\rho}}\big|\nabla(\Delta u+\alpha)^{(q-1)/2}\big|^2\leq \frac{C(q-1)}{\rho^2}\int_{B_{R+2\rho}}(\Delta u+\alpha)^{q+2}.
	\end{equation}
\end{prop}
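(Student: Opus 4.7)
The plan is to integrate by parts on the right-hand side of \eqref{-27} in order to expose, on the left-hand side, a coercive quantity of the form $G \defeq (q-1)\int \eta(\Delta u+\alpha)^{q-2} F^{ij}\nabla_i\Delta u\, \nabla_j\Delta u$, and then convert $G$ into the target integral $\int \eta|\nabla(\Delta u+\alpha)^{(q-1)/2}|^2$ using an algebraic consequence of the equation $\sigma_2(W[u])=B[u]^2$.

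First I would integrate by parts once in the principal term $I_1 \defeq \int \eta(\Delta u+\alpha)^{q-1}F^{ij}\nabla_i\nabla_j\Delta u$, moving $\nabla_i$ off $\Delta u$. This produces three pieces: (i)~the coercive contribution $-(q-1)\int\eta(\Delta u+\alpha)^{q-2}F^{ij}\nabla_i\Delta u\, \nabla_j\Delta u$, which, transferred to the left-hand side, becomes $G$; (ii)~a cutoff contribution involving $\nabla\eta$, which by Cauchy--Schwarz in the $F$-inner product is bounded by $\varepsilon G + C\varepsilon^{-1}\rho^{-2}\int(\Delta u+\alpha)^{q+1}$; and (iii)~a divergence contribution $-\int \eta(\Delta u+\alpha)^{q-1}(\nabla_i F^{ij})\nabla_j\Delta u$. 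The key structural input is the divergence formula for the Newton tensor of $\sigma_2$: commuting covariant derivatives via the Ricci identity yields $\nabla_i F^{ij} = \nabla_i A[u]^{ij} - \nabla^j\operatorname{tr}(A[u]) + \operatorname{Ric}(g_0)\ast \nabla u$, which is of size $O(1+|\nabla^2 u|)$ and in particular contains no third-order derivatives of $u$. In the pure $k$-Hessian Euclidean case $A\equiv 0$ this quantity vanishes identically, which is the feature that allows the Urbas-type argument in \cite{Urb00, Urb01} to close without further work.

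The main obstacle is that contribution (iii), together with the third-order contribution from $I_2 \defeq -\int\eta(\Delta u+\alpha)^{q-1}F^{ij}\Delta A[u]_{ij}$ (namely $-F^{ij}(\partial_{\xi_l}A_{ij})[u]\nabla^l\Delta u$, obtained after expanding $\Delta A[u]_{ij}$ and commuting derivatives) and from the left-hand side $2\int\eta(\Delta u+\alpha)^{q-1}B\Delta B[u]$ (namely $2BB_{\xi_l}\nabla^l\Delta u$), all take the form $\int \eta(\Delta u+\alpha)^{q-1}V^l\nabla_l\Delta u$ with $V$ of size $O(1+|\nabla^2 u|)$. A naive Cauchy--Schwarz plus AM--GM on such a term produces an error of order $\int(\Delta u+\alpha)^{q+3}$, which is \emph{one power too large} to match the right-hand side of \eqref{81}. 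The divergence-structure cancellation referred to in the introduction consists of performing one further integration by parts on each of these three third-order terms and then invoking the differentiated equation $F^{ij}\nabla_l W[u]_{ij} = 2B\nabla_l B$: the worst $|\nabla^2 u|^2$-type contributions from (iii), from $F^{ij}\partial_{\xi_l}A_{ij}$ and from $BB_{\xi_l}$ then combine with opposite signs and cancel, leaving only terms of order $\int(\Delta u+\alpha)^{q+2}$ plus an absorbable fraction of $G$. Carrying out this cancellation is the delicate step of the proof.

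Once the right-hand side of \eqref{-27} has been bounded by $\tfrac12 G + C\rho^{-2}\int(\Delta u+\alpha)^{q+2}$, rearrangement gives $G \le C\rho^{-2}\int(\Delta u+\alpha)^{q+2}$. To convert this into the claimed estimate \eqref{81}, I would use that the equation $\sigma_2(W[u]) = B^2$ forces the algebraic identity $|W[u]|^2 = \sigma_1(W[u])^2 - 2B^2$, which pins down the smallest eigenvalue of the Newton tensor $F = \sigma_1 I - W$: namely $\sigma_1 - |W[u]| = \sigma_1 - \sqrt{\sigma_1^2 - 2B^2} \ge B^2/\sigma_1 \ge c(\Delta u+\alpha)^{-1}$ for a constant $c$ depending on $\inf B$. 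Hence $F^{ij}\xi_i\xi_j \ge c(\Delta u+\alpha)^{-1}|\xi|^2$, so that
\[
G \ge c(q-1)\int\eta(\Delta u+\alpha)^{q-3}|\nabla\Delta u|^2 = \frac{4c}{q-1}\int \eta\,|\nabla(\Delta u+\alpha)^{(q-1)/2}|^2,
\]
and combining with $G \le C\rho^{-2}\int(\Delta u+\alpha)^{q+2}$ yields exactly \eqref{81}.
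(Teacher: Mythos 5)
Your opening and closing steps are sound and essentially identical to the paper's: the first integration by parts on $I_1$ to produce the coercive term $G$, and the conversion of $G$ into $\int\eta|\nabla(\Delta u+\alpha)^{(q-1)/2}|^2$ via the pointwise lower bound $F^{ij}\xi_i\xi_j\ge B[u]^2|\xi|^2/\sigma_1(W[u])$ are both correct (the paper derives the latter from the concavity chain $F_{(k)}/\sigma_k\ge F_{(1)}/\sigma_1$ rather than from $|W|^2=\sigma_1^2-2B^2$, but the resulting bound is the same). The problem is the middle of the argument: you have correctly identified that some cancellation of third-order terms is needed, but the mechanism you propose for that cancellation does not match what actually works, and you explicitly leave it unverified (``Carrying out this cancellation is the delicate step of the proof'').

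Concretely, three issues. First, the $B$-contribution does not belong in the cancellation at all: since $B$ is scalar, $V^l=2BB_{\xi_l}[u]$ is $O(1)$, not $O(1+|\nabla^2u|)$ as you claim, so a single integration by parts on $\int\eta(\Delta u+\alpha)^{q-1}V^l\nabla_l\Delta u$ already gives an error of order $\rho^{-2}\int(\Delta u+\alpha)^{q+1}$ with nothing to cancel. Second, the differentiated equation $F^{ij}\nabla_lW[u]_{ij}=2B\nabla_lB$ is not the right tool: the genuinely dangerous third-order contraction is $\tfrac{\partial A_{ij}}{\partial\xi_l}[u]\nabla^i\nabla^j\nabla_l u$, which cannot be produced by tracing $F^{ij}$ against $\nabla_l(\nabla_i\nabla_j u)$. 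Third, and most importantly, the actual cancellation in the paper is purely algebraic and does not invoke the equation: one integrates by parts \emph{twice} on $I_1$, so that the divergence-free (modulo curvature) structure of the Newton tensor eliminates the $\Delta^2u$ piece and leaves $-\tfrac1q\int\eta(\Delta u+\alpha)^q\nabla_j\nabla_iF^{ij}$, whose worst part is $-\tfrac1q\int\eta(\Delta u+\alpha)^q\tfrac{\partial A_{ij}}{\partial\xi_l}[u]\nabla^i\nabla^j\nabla_l u$; meanwhile, one expands the third-order piece of $F^{ij}\Delta A[u]_{ij}$ using the explicit formula $F^{ij}=\operatorname{tr}(W)\delta^{ij}-\nabla^i\nabla^j u+A^{ij}$, isolates the only problematic summand $-\int\eta(\Delta u+\alpha)^{q-1}\nabla^i\nabla^ju\,\tfrac{\partial A_{ij}}{\partial\xi_a}[u]\nabla_a\Delta u$, integrates it by parts once, and obtains exactly $+\tfrac1q\int\eta(\Delta u+\alpha)^q\tfrac{\partial A_{ij}}{\partial\xi_a}[u]\nabla^i\nabla^j\nabla_a u$, which cancels the former precisely. (The remaining pieces of $F^{ij}\Delta A[u]_{ij}$, coming from $\operatorname{tr}(W)\delta^{ij}$ and $A^{ij}$, are handled by the same one-IBP trick you describe for $B$, since $\operatorname{tr}(W)=(\Delta u+\alpha)-\operatorname{tr}(A)-\alpha$ lets one pair the power of $\Delta u+\alpha$ with the power of $\operatorname{tr}(W)$.) Also note that it is the $\nabla^3u$ terms, not the $|\nabla^2u|^2$ terms, that must cancel: after one IBP the residual $|\nabla^2u|^2$-type errors are of size $\int(\Delta u+\alpha)^{q+2}$ and are already acceptable. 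Until this exact cancellation is exhibited, the proof does not close.
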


We will prove Proposition \ref{17} through a series of lemmas. We first prove: 

\begin{lem}\label{t8}
	Under the same hypotheses as Proposition \ref{17},
	\begin{align}\label{-41}
	&(q-1)\int_{B_{R+2\rho}}\eta(\Delta u+\alpha)^{q-2}F^{ij}\nabla_i\Delta u \nabla_j\Delta u - \frac{1}{q}\int_{B_{R+2\rho}}(\Delta u + \alpha)^q \eta \nabla_j \nabla_i F^{ij} \nonumber \\
	&   + \int_{B_{R+2\rho}}\eta(\Delta u+\alpha)^{q-1} F^{ij}\Delta A[u]_{ij}  \leq C\rho^{-2}\int_{B_{R+2\rho}}(\Delta u + \alpha)^{q+1}. 
	\end{align}
\end{lem}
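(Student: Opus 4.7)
My plan is to derive \eqref{-41} from \eqref{-27} by integration by parts in the term involving $F^{ij}\nabla_i\nabla_j\Delta u$, and then estimate the resulting auxiliary terms. The key preliminary observation is that, since $F^{ij}=\sigma_1 g^{ij}-W^{ij}$ is the first Newton tensor of $\sigma_2$, the divergence $\nabla_j F^{ij}$ is of lower order than it might first appear: writing $\nabla_j F^{ij}=\nabla^i\sigma_1-\nabla_jW^{ij}$ and commuting covariant derivatives inside $\nabla_j W^{ij}=g^{ia}\nabla^b W_{ab}$, the expected top-order contribution $\nabla^i w$ cancels, leaving only curvature terms and derivatives of $A$. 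Consequently $|\nabla_j F^{ij}|\leq Cw$, which is the divergence structure referenced at the end of the introduction.

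The first step is to integrate by parts twice in $\int \eta w^{q-1}F^{ij}\nabla_i\nabla_j w$ (with $w:=\Delta u+\alpha$), using $w^{q-1}\nabla_iw=\frac{1}{q}\nabla_i(w^q)$ at the second pass so that both derivatives come off $w$. This produces the interior terms $-(q-1)\int \eta w^{q-2}F^{ij}\nabla_iw\nabla_jw$ and $\frac{1}{q}\int \eta w^q\nabla_i\nabla_j F^{ij}$, together with two boundary-type terms involving $\nabla\eta$. Substituting back into \eqref{-27} and rearranging isolates the LHS of \eqref{-41}, while the RHS reduces to
$$-2\int\eta w^{q-1}B\Delta B[u]-\int w^{q-1}F^{ij}\nabla_iw\nabla_j\eta+\frac{1}{q}\int w^q\nabla_j F^{ij}\nabla_i\eta+C\int w^{q+1}.$$

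The simpler error terms are handled directly: $\frac{1}{q}\int w^q\nabla_jF^{ij}\nabla_i\eta$ is bounded by $C\rho^{-2}\int w^{q+1}$ using $|\nabla_jF^{ij}|\leq Cw$ and $|\nabla\eta|\leq C\rho^{-1}$, while the $C\int w^{q+1}$ is dominated by $C\rho^{-2}\int w^{q+1}$ since $\rho\leq R/2$ is bounded. For $-\int w^{q-1}F^{ij}\nabla_iw\nabla_j\eta$, rather than applying Cauchy--Schwarz in the $F$-inner product and absorbing into the positive LHS term (which would spoil the coefficient $(q-1)$ in \eqref{-41}), I would integrate by parts once more, writing $w^{q-1}\nabla_iw=\frac{1}{q}\nabla_i(w^q)$ and moving $\nabla_i$ off, to obtain $\frac{1}{q}\int[\nabla_iF^{ij}\nabla_j\eta+F^{ij}\nabla_i\nabla_j\eta]w^q$; this is bounded in absolute value by $C\rho^{-2}\int w^{q+1}$ via the divergence bound together with $|F|\leq Cw$ and $|\nabla^2\eta|\leq C\rho^{-2}$.

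The main obstacle is the term $-2\int\eta w^{q-1}B\Delta B[u]$: a naive pointwise bound gives $|\Delta B[u]|\leq C(|\nabla w|+w^2)$, and the resulting $\int\eta w^{q-1}|\nabla w|$ cannot be absorbed into $(q-1)\int\eta w^{q-2}F^{ij}\nabla_iw\nabla_jw$ without a uniform lower bound on $F$, which need not hold in $\Gamma_2^+$. To resolve this, I would observe that the only third-order contribution to $\Delta B[u]$ arises, after commuting, through $B_{\xi^l}[u]\Delta\nabla_lu$, and using $\Delta\nabla_lu=\nabla_lw+\operatorname{Ric}_l^m\nabla_mu$ one obtains the clean decomposition $\Delta B[u]=B_{\xi^l}[u]\nabla_lw+R[u]$ with $|R[u]|\leq Cw^2$. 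The $R[u]$ contribution gives at most $C\int\eta w^{q+1}\leq C\rho^{-2}\int w^{q+1}$, while for the third-order piece I would rewrite $\eta BB_{\xi^l}w^{q-1}\nabla_lw=\frac{1}{q}\eta BB_{\xi^l}\nabla_l(w^q)$ and integrate by parts; since $BB_{\xi^l}$ is bounded and a direct chain-rule computation yields $|\nabla_l(BB_{\xi^l})|\leq Cw$ (using $B\in C^2$ and the $C^1$ control on $u$), the resulting boundary and interior terms are each bounded by $C\rho^{-2}\int w^{q+1}$, completing the proof.
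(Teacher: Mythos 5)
Your proposal is correct and follows essentially the same route as the paper's proof: integrate by parts twice in the $F^{ij}\nabla_i\nabla_j\Delta u$ term (moving factors onto $(\Delta u+\alpha)^q/q$), exploit the divergence bound $|\nabla_iF^{ij}|\leq C(\Delta u+\alpha)$ coming from the Newton-tensor structure, and handle the $B\Delta B[u]$ term by isolating its third-order part $B B_{\xi_l}\nabla_l\Delta u$ and integrating by parts once more. The decompositions, sign bookkeeping, and error bounds all match the paper's Lemma 4.3 argument.
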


\begin{proof}
	Integrating by parts in the first integral on the RHS of \eqref{-27}, we see
	\begin{align}\label{-35}
	\int_{B_{R+2\rho}} \eta& (\Delta u+\alpha)^{q-1}F^{ij} \nabla_i\nabla_j \Delta u  \nonumber \\
	& = -\int_{B_{R+2\rho}}\eta(\Delta u+\alpha)^{q-1}\nabla_i F^{ij}\nabla_j\Delta u  -\int_{B_{R+2\rho}}(\Delta u+\alpha)^{q-1} F^{ij}\nabla_i\eta \nabla_j\Delta u \nonumber \\
	& \quad  - (q-1)\int_{B_{R+2\rho}}\eta(\Delta u+\alpha)^{q-2}F^{ij}\nabla_i\Delta u \nabla_j\Delta u.
	\end{align}
	Integrating by parts again, the first integral on the RHS of \eqref{-35} is 
	\begin{align}\label{44}
	-\int_{B_{R+2\rho}}&\eta(\Delta u+\alpha)^{q-1}\nabla_i F^{ij}\nabla_j\Delta u \nonumber \\
	& = -\frac{1}{q}\int_{B_{R+2\rho}}\eta\nabla_i F^{ij} \nabla_j(\Delta u+\alpha)^q \nonumber \\
	& = \frac{1}{q}\int_{B_{R+2\rho}}(\Delta u +\alpha)^q \nabla_i F^{ij}\nabla_j\eta + \frac{1}{q}\int_{B_{R+2\rho}}(\Delta u + \alpha)^q \eta \nabla_j \nabla_i F^{ij}.
	\end{align}
	Similarly, the second integral on the RHS of \eqref{-35} can be further computed as follows:
	\begin{align}\label{-40}
	-\int_{B_{R+2\rho}}&(\Delta u+\alpha)^{q-1} F^{ij}\nabla_i\eta \nabla_j\Delta u \nonumber \\
	&   = -\frac{1}{q}\int_{B_{R+2\rho}}F^{ij}\nabla_i\eta \nabla_j(\Delta u+\alpha)^{q} \nonumber \\
	& = \frac{1}{q}\int_{B_{R+2\rho}}(\Delta u+\alpha)^{q}\nabla_i \eta\nabla_j F^{ij} + \frac{1}{q}\int_{B_{R+2\rho}}(\Delta u+\alpha)^{q}F^{ij}\nabla_j\nabla_i\eta \nonumber \\
	& \geq \frac{1}{q}\int_{B_{R+2\rho}}(\Delta u+\alpha)^{q}\nabla_i \eta\nabla_j F^{ij} - C\rho^{-2}\int_{B_{R+2\rho}}(\Delta u + \alpha)^{q+1}.
	\end{align}
	Substituting \eqref{44} and \eqref{-40} into \eqref{-35}, and then \eqref{-35} back into \eqref{-27} and rearranging, we obtain 
	\begin{align}\label{-42}
	&(q-1)\int_{B_{R+2\rho}}\eta(\Delta u+\alpha)^{q-2}F^{ij}\nabla_i\Delta u \nabla_j\Delta u  - \frac{1}{q}\int_{B_{R+2\rho}}(\Delta u + \alpha)^q \eta \nabla_j \nabla_i F^{ij}   \nonumber \\
	&\quad + \int_{B_{R+2\rho}}\eta(\Delta u+\alpha)^{q-1} F^{ij}\Delta A[u]_{ij} \leq  \frac{2}{q}\int_{B_{R+2\rho}}(\Delta u+\alpha)^{q}\nabla_i F^{ij}\nabla_j \eta \nonumber \\
	& \quad  - 2\int_{B_{R+2\rho}} \eta(\Delta u +\alpha)^{q-1} B\Delta B[u]+ C\rho^{-2}\int_{B_{R+2\rho}}(\Delta u+\alpha)^{q+1}.
	\end{align}
	Note that the LHS of \eqref{-42} is precisely the LHS of \eqref{-41}. Therefore, to obtain \eqref{-41} it remains to estimate the first two integrals on the RHS of \eqref{-42} from above by $C\rho^{-2}\int_{B_{R+2\rho}}(\Delta u + \alpha)^{q+1}$. For the first of these, we calculate the divergence of $F$:
	\begin{align}\label{-47}
	\nabla_i F^{ij} & = \nabla_i \big(\operatorname{tr}(W[u])\delta^{ij} - (W[u])^{ij} \big)  \nonumber \\
	& = \nabla^j \big(\Delta u - \operatorname{tr}(A[u])\big)- \nabla_i \big(\nabla^i\nabla^j u - A[u]^{ij}\big) \nonumber \\
	& \leq C(\Delta u + \alpha),
	\end{align}
	where to reach the last line we have commuted derivatives to assert $|\nabla^j\Delta u - \nabla_i\nabla^i\nabla^j u| \leq C|\nabla u| \leq C$. The desired estimate for the first integral on the RHS of \eqref{-42} then follows immediately.
	
	For the second integral on the RHS of \eqref{-42}, by the calculation in \eqref{20} with $B[u]$ in place of $A[u]_{ij}$, we see
	\begin{align*}
	- 2\int_{B_{R+2\rho}} &\eta(\Delta u  +\alpha)^{q-1} B\Delta B[u]  \nonumber \\
	&  \leq -2\int_{B_{R+2\rho}}\eta(\Delta u + \alpha)^{q-1} B \frac{\partial B}{\partial \xi_l}[u]\nabla_l\Delta u + C\int_{B_{R+2\rho}}(\Delta u + \alpha)^{q+1} \nonumber \\
	& = -\frac{2}{q}\int_{B_{R+2\rho}}\eta B\frac{\partial B}{\partial \xi_l}[u]\nabla_l(\Delta u + \alpha)^{q} + C\int_{B_{R+2\rho}}(\Delta u + \alpha)^{q+1} \nonumber \\
	& \leq C\rho^{-2}\int_{B_{R+2\rho}}(\Delta u + \alpha)^{q+1},
	\end{align*}
	where to reach the last line we have integrated by parts and used properties of $\eta$. 
\end{proof}

With Proposition \ref{17} in mind, we now deal with the integrals on the LHS of \eqref{-41}, starting with a pointwise estimate for the first integrand:

\begin{lem}\label{t9}
	Under the same hypotheses as Proposition \ref{17},
	\begin{equation}\label{n14}
	(\Delta u+\alpha)^{q-2}F^{ij}\nabla_i \Delta u\nabla_j\Delta u \geq \frac{CB[u]^2}{(q-1)^2}\Big|\nabla(\Delta u+\alpha)^{(q-1)/2}\Big|^2.
	\end{equation}
\end{lem}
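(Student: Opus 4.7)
The plan is to reduce the desired inequality to a uniform lower bound on the smallest eigenvalue of $F$. A direct computation gives
\begin{equation*}
\bigl|\nabla(\Delta u+\alpha)^{(q-1)/2}\bigr|^2 = \frac{(q-1)^2}{4}(\Delta u+\alpha)^{q-3}|\nabla\Delta u|^2,
\end{equation*}
so \eqref{n14} is equivalent to showing that
\begin{equation*}
F^{ij}\nabla_i\Delta u\,\nabla_j\Delta u \geq \frac{CB[u]^2}{\Delta u+\alpha}\,|\nabla\Delta u|^2
\end{equation*}
at every point of $B_{2R}$. Since $\Delta u+\alpha\geq C^{-1}\sigma_1(\lambda(W[u]))$, it suffices to bound the smallest eigenvalue $\mu_{\min}$ of $F$ from below by $B[u]^2/\sigma_1(\lambda(W[u]))$ (up to a constant).

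By formula \eqref{-12}, the eigenvalues of $F$ are $\mu_i = \sigma_1-\lambda_i$, where $\lambda_1\geq\dots\geq\lambda_n$ are the eigenvalues of $W[u]$; in particular $\mu_{\min} = \sigma_1-\lambda_1$. Since $\lambda(W[u])\in\Gamma_2^+$, we have $\sigma_1>0$ and $\sigma_2=B[u]^2>0$, and the identity $|\lambda|^2 = \sigma_1^2 - 2\sigma_2$ yields $\lambda_1 \leq |\lambda| = \sqrt{\sigma_1^2 - 2\sigma_2}$. Therefore, rationalising the numerator,
\begin{equation*}
\sigma_1-\lambda_1 \,\geq\, \sigma_1 - \sqrt{\sigma_1^2 - 2\sigma_2} \,=\, \frac{2\sigma_2}{\sigma_1 + \sqrt{\sigma_1^2-2\sigma_2}} \,\geq\, \frac{\sigma_2}{\sigma_1} \,=\, \frac{B[u]^2}{\sigma_1(\lambda(W[u]))}.
\end{equation*}

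Applying this bound to the vector $\nabla\Delta u$ and then using $\sigma_1(\lambda(W[u]))\leq \Delta u + \operatorname{tr}(-A[u]) \leq C(\Delta u+\alpha)$ gives the required pointwise inequality for $F^{ij}\nabla_i\Delta u\,\nabla_j\Delta u$. Multiplying through by $(\Delta u+\alpha)^{q-2}$ and recognising the right-hand side via the expression for $|\nabla(\Delta u+\alpha)^{(q-1)/2}|^2$ above yields \eqref{n14}. There is no real obstacle here — the only substantive step is the eigenvalue bound $\mu_{\min}\geq B[u]^2/\sigma_1$, which follows cleanly from $|\lambda|^2 = \sigma_1^2 - 2\sigma_2$; everything else is bookkeeping.
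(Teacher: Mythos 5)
Your proof is correct, and it takes a genuinely different route from the paper's. Both approaches reduce to the same intermediate pointwise matrix inequality $F^{ij} \geq \frac{\sigma_2}{\sigma_1}\delta^{ij} = \frac{B[u]^2}{\sigma_1}\delta^{ij}$ on the set where $\lambda(W[u])\in\Gamma_2^+$, but they get there differently. The paper invokes the concavity of $\sigma_k/\sigma_{k-1}$ on the $\Gamma_k^+$-cone to obtain the chain of matrix inequalities
\begin{equation*}
\frac{F^{ij}_{(k)}}{\sigma_k}\ \geq\ \frac{F^{ij}_{(k-1)}}{\sigma_{k-1}}\ \geq\ \cdots\ \geq\ \frac{\delta^{ij}}{\sigma_1},
\end{equation*}
and specialises to $k=2$; this is a general Newton-tensor fact valid for all $k$. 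You instead diagonalise $F$ explicitly — its eigenvalues are $\sigma_1-\lambda_i$, so the smallest is $\sigma_1-\lambda_{\max}$ — and use $\lambda_{\max}\leq|\lambda|=\sqrt{\sigma_1^2-2\sigma_2}$ together with the rationalisation $\sigma_1-\sqrt{\sigma_1^2-2\sigma_2}=\frac{2\sigma_2}{\sigma_1+\sqrt{\sigma_1^2-2\sigma_2}}\geq\frac{\sigma_2}{\sigma_1}$. Your argument is more elementary and self-contained (it needs only the identity $|\lambda|^2=\sigma_1^2-2\sigma_2$ rather than the concavity of the Hessian quotient), at the cost of being specific to $k=2$; the paper's argument buys generality across all $k$. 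One small point worth making explicit: $\lambda_{\max}\leq|\lambda|$ uses $\lambda_{\max}>0$, which holds because $\sigma_1>0$ forces at least one positive eigenvalue; and positivity of $\sigma_2$ in $\Gamma_2^+$ ensures $\sqrt{\sigma_1^2-2\sigma_2}<\sigma_1$, which you need both for the rationalisation step and for the eigenvalue bound to be strictly positive.
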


\begin{proof}
	The proof is similar to that given in \cite{Urb00}; we give the argument here for completeness. Let $F^{ij}_{(l)}(X)$ denote the matrix with entries $\partial\sigma_l(\lambda(X))/\partial X_{ij}$. Then by concavity of $\sigma_k(\lambda(X))/\sigma_{k-1}(\lambda(X))$ on the set of symmetric matrices with $\lambda(X)\in\Gamma_k^+$, on this set we have
	\begin{align}\label{82}
	\frac{F_{(k)}^{ij}(X)}{\sigma_k(\lambda(X))} \geq \frac{F_{(k-1)}^{ij}(X)}{\sigma_{k-1}(\lambda(X))} \geq \dots \geq \frac{F_{(1)}^{ij}(X)}{\sigma_1(\lambda(X))} = \frac{\delta^{ij}}{\sigma_1(\lambda(X))}.
	\end{align}
	Taking $X = W[u]$ and $k=2$ in \eqref{82}, and applying the equation \eqref{28}, it follows that
	\begin{equation*}
	F^{ij} \geq \frac{B[u]^2\delta^{ij}}{\operatorname{tr}(W[u])} \geq \frac{B[u]^2\delta^{ij}}{\Delta u + \alpha}, 
	\end{equation*}
	and hence
	\begin{align*}
	(\Delta u+\alpha)^{q-3}F^{ij}\nabla_i \Delta u\nabla_j\Delta u & = \frac{4}{(q-1)^2} F^{ij}\nabla_i(\Delta u + \alpha)^{(q-1)/2}\nabla_j(\Delta u + \alpha)^{(q-1)/2} \nonumber \\
	& \geq \frac{4B[u]^2}{(q-1)^2}\frac{1}{\Delta u + \alpha}\Big|\nabla(\Delta u+\alpha)^{(q-1)/2}\Big|^2. 
	\end{align*}
	Multiplying through by $\Delta u + \alpha$, we arrive at \eqref{n14}. 
\end{proof}

We next prove the following estimate for the third integral on the RHS of \eqref{-41}: 

\begin{lem}
	Under the same hypotheses as Proposition \ref{17},
	\begin{align}\label{t10}
	\int_{B_{R+2\rho}}\eta(\Delta u + \alpha)^{q-1}F^{ij}\Delta A[u]_{ij}& \geq - \int_{B_{R+2\rho}}\eta(\Delta u + \alpha)^{q-1}\nabla^i\nabla^j u\frac{\partial A_{ij}}{\partial\xi_a}[u]\nabla_a\Delta u \nonumber \\
	& \quad  - C\rho^{-2}\int_{B_{R+2\rho}}(\Delta u + \alpha)^{q+2}. 
	\end{align}
\end{lem}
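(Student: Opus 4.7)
The starting point is the expansion \eqref{20} of $\Delta A[u]_{ij}$, which I will regroup as
\begin{equation*}
\Delta A[u]_{ij} \;=\; \tfrac{\partial A_{ij}}{\partial \xi_l}[u]\,\nabla^k\nabla_k\nabla_l u \;+\; \tfrac{\partial^2 A_{ij}}{\partial\xi_p\partial\xi_l}[u]\,\nabla^k\nabla_p u\,\nabla_k\nabla_l u \;+\; R_{ij},
\end{equation*}
where $R_{ij}$ collects all terms involving no third derivative and at most one factor of $\nabla^2 u$, and so satisfies $|R_{ij}|\leq C(1+|\nabla^2 u|)\leq C(\Delta u+\alpha)$. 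Using $|F|\leq C(\Delta u+\alpha)$ together with the pointwise bound $|\nabla^2 u|^2\leq C(\Delta u+\alpha)^2$, the contributions of $R_{ij}$ and of the $\partial^2_{\xi\xi}A$-term to $\int \eta(\Delta u+\alpha)^{q-1}F^{ij}\Delta A[u]_{ij}$ are both dominated, in absolute value, by $C\int(\Delta u+\alpha)^{q+2}$ (using $\Delta u+\alpha\geq 1$). Likewise, after commuting covariant derivatives $\nabla^k\nabla_k\nabla_l u = \nabla_l\Delta u + E_l$ with $|E_l|\leq C|\nabla u|\leq C$, the $E_l$ contribution is at most $C\int(\Delta u+\alpha)^{q+1}$. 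All of these are absorbed into $C\rho^{-2}\int(\Delta u+\alpha)^{q+2}$, since $\rho\leq R/2$ is bounded.

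The problem thereby reduces to controlling the genuinely third-order piece $\int \eta(\Delta u+\alpha)^{q-1}F^{ij}\tfrac{\partial A_{ij}}{\partial\xi_a}[u]\nabla_a\Delta u$. Here the crucial algebraic step is the first-Newton-tensor identity \eqref{-12},
\begin{equation*}
F^{ij} \;=\; (\Delta u - \operatorname{tr} A[u])g^{ij}\;-\;(\nabla^i\nabla^j u - A[u]^{ij}),
\end{equation*}
which on contraction with $\tfrac{\partial A_{ij}}{\partial\xi_a}$ yields the splitting
\begin{equation*}
F^{ij}\tfrac{\partial A_{ij}}{\partial\xi_a}\;=\;\Psi^a\;-\;\nabla^i\nabla^j u\,\tfrac{\partial A_{ij}}{\partial\xi_a},\qquad \Psi^a\;:=\;(\Delta u-\operatorname{tr} A[u])h_a + A[u]^{ij}\tfrac{\partial A_{ij}}{\partial\xi_a},
\end{equation*}
with $h_a:=g^{ij}\tfrac{\partial A_{ij}}{\partial\xi_a}[u]$. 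The second summand is precisely the term retained on the right-hand side of \eqref{t10}, so it remains only to control $\int\eta(\Delta u+\alpha)^{q-1}\Psi^a\nabla_a\Delta u$.

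I would handle this by decomposing $\Psi^a = (\Delta u+\alpha)h_a + \chi^a$, where $\chi^a := -(\alpha+\operatorname{tr} A[u])h_a + A[u]^{ij}\tfrac{\partial A_{ij}}{\partial\xi_a}$ is pointwise bounded. The first summand pairs with $(\Delta u+\alpha)^{q-1}\nabla_a\Delta u=\tfrac{1}{q+1}\nabla_a(\Delta u+\alpha)^{q+1}$ and the second with $\tfrac{1}{q}\nabla_a(\Delta u+\alpha)^q$; a divergence-free integration by parts transfers the derivative onto $\eta h_a$ and $\eta\chi^a$ respectively. Using $|\nabla\eta|\leq C\rho^{-1}$ and the chain-rule bounds $|\nabla h_a|+|\nabla\chi^a|\leq C(1+|\nabla^2 u|)\leq C(\Delta u+\alpha)$, the resulting contribution is controlled by
\begin{equation*}
C\rho^{-1}\int(\Delta u+\alpha)^{q+1}\;+\;C\int(\Delta u+\alpha)^{q+2}\;\leq\;C\rho^{-2}\int(\Delta u+\alpha)^{q+2},
\end{equation*}
which closes the argument. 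The main obstacle to overcome is finding the right split of $F^{ij}$: it is essential to use the first-Newton-tensor structure to separate the \emph{tensorial} piece $\nabla^i\nabla^j u\,\tfrac{\partial A_{ij}}{\partial\xi_a}\nabla_a\Delta u$, which must be retained on the right-hand side of \eqref{t10} for later cancellation against the gradient term produced by Lemma \ref{t9}, from the remaining scalar-coefficient piece $\Psi^a\nabla_a\Delta u$, which is the only part admitting the integration-by-parts gain that converts the undesirable third-order factor $\nabla_a\Delta u$ into the $\rho^{-k}$-controlled quantities appearing in \eqref{t10}.
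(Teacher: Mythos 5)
Your proof is correct and follows essentially the same route as the paper: expand $\Delta A[u]_{ij}$ via the chain rule, isolate the one genuinely third-order term $\frac{\partial A_{ij}}{\partial\xi_l}\nabla_l\Delta u$ (after commuting derivatives), substitute the Newton tensor identity $F^{ij}=\operatorname{tr}(W[u])\delta^{ij}-\nabla^i\nabla^j u+A[u]^{ij}$, retain the $\nabla^i\nabla^j u$ piece for later cancellation, and integrate by parts on the remaining scalar-coefficient pieces paired with $\nabla_a(\Delta u+\alpha)^{q}$ or $\nabla_a(\Delta u+\alpha)^{q+1}$. Your grouping into $\Psi^a$ and then $(\Delta u+\alpha)h_a+\chi^a$ is a slightly more streamlined bookkeeping of what the paper does term by term, but the underlying decomposition and estimates are identical.
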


\begin{proof} 
By \eqref{20},
\begin{align}\label{51}
F^{ij}\Delta A[u]_{ij}  &  \geq - C(\Delta u + \alpha)^3 + F^{ij}\frac{\partial A_{ij}}{\partial \xi_l}[u]\Delta\nabla_l u \nonumber \\
& \geq - C(\Delta u + \alpha)^3 + F^{ij}\frac{\partial A_{ij}}{\partial \xi_l}[u]\nabla_l\Delta u,
\end{align}
where to reach the second line we have commuted derivatives and absorbed resulting curvature terms into the $-C(\Delta u + \alpha)^3$ term. Substituting \eqref{51} into the LHS of \eqref{t10} and then using the identity $F^{ij} = \operatorname{tr}(W[u])\delta^{ij} - \nabla^i\nabla^j u  + A^{ij}$, we have
	\begin{align}\label{52}
\int_{B_{R+2\rho}}&\eta(\Delta u + \alpha)^{q-1}F^{ij}\Delta A[u]_{ij} \nonumber \\
& \geq \int_{B_{R+2\rho}}\eta(\Delta u + \alpha)^{q-1}F^{ij}\frac{\partial A_{ij}}{\partial\xi_a}[u]\nabla_a\Delta u   - C\rho^{-2}\int_{B_{R+2\rho}}(\Delta u + \alpha)^{q+2} \nonumber \\
& = \int_{B_{R+2\rho}}\eta(\Delta u + \alpha)^{q-1}\operatorname{tr}(W[u])\frac{\partial\operatorname{tr}(A)}{\partial\xi_a}[u]\nabla_a\Delta u \nonumber \\
& \quad - \int_{B_{R+2\rho}}\eta(\Delta u + \alpha)^{q-1}\nabla^i\nabla^j u \frac{\partial A_{ij}}{\partial \xi_a}[u]\nabla_a\Delta u \nonumber \\
& \quad + \int_{B_{R+2\rho}}\eta(\Delta u + \alpha)^{q-1}A^{ij}\frac{\partial A_{ij}}{\partial \xi_a}[u]\nabla_a\Delta u - C\rho^{-2}\int_{B_{R+2\rho}}(\Delta u + \alpha)^{q+2}. 
\end{align}
Now, since $\operatorname{tr}(W[u]) = (\Delta u + \alpha) - \operatorname{tr}(A) - \alpha$, the term on the third line of \eqref{52} can be written as 
\begin{align}\label{73}
&\int_{B_{R+2\rho}}\eta(\Delta u + \alpha)^{q-1}\operatorname{tr}(W[u])\frac{\partial\operatorname{tr}(A)}{\partial\xi_a}[u]\nabla_a\Delta u \nonumber \\
& = \int_{B_{R+2\rho}}\eta(\Delta u + \alpha)^{q}\frac{\partial\operatorname{tr}(A)}{\partial\xi_a}[u]\nabla_a\Delta u - \int_{B_{R+2\rho}}\eta(\Delta u + \alpha)^{q-1}\operatorname{tr}(A)\frac{\partial\operatorname{tr}(A)}{\partial\xi_a}[u]\nabla_a\Delta u \nonumber \\
& \quad -\alpha\int_{B_{R+2\rho}}\eta(\Delta u + \alpha)^{q-1}\frac{\partial\operatorname{tr}(A)}{\partial\xi_a}[u]\nabla_a\Delta u 
\nonumber \\
& = \frac{1}{q+1}\int_{B_{R+2\rho}}\eta\frac{\partial\operatorname{tr}(A)}{\partial\xi_a}[u]\nabla_a(\Delta u+\alpha)^{q+1} - \frac{1}{q} \int_{B_{R+2\rho}}\eta \operatorname{tr}(A)\frac{\partial\operatorname{tr}(A)}{\partial\xi_a}[u]\nabla_a(\Delta u+\alpha)^q \nonumber \\
& \quad -\frac{\alpha}{q}\int_{B_{R+2\rho}}\eta \frac{\partial\operatorname{tr}(A)}{\partial\xi_a}[u]\nabla_a(\Delta u +\alpha)^q.
\end{align}
After integrating by parts in each of the last three integrals in \eqref{73}, we therefore see that 
\begin{align}\label{74}
\int_{B_{R+2\rho}}\eta(\Delta u + \alpha)^{q-1}\operatorname{tr}(W[u])\frac{\partial\operatorname{tr}(A)}{\partial\xi_a}[u]\nabla_a\Delta u \geq -C\rho^{-2}\int_{B_{R+2\rho}}(\Delta u + \alpha)^{q+1}. 
\end{align}
Likewise, writing the penultimate term in \eqref{52} as
\begin{equation*}
\frac{1}{q}\int_{B_{R+2\rho}}\eta A^{ij}\frac{\partial A_{ij}}{\partial\xi_a}[u]\nabla_a(\Delta u + \alpha)^q
\end{equation*}
and integrating by parts, we observe that this term is also bounded from below by \linebreak  $-C\rho^{-2}\int_{B_{R+2\rho}}(\Delta u + \alpha)^{q+1}$. Substituting this estimate and \eqref{74} back into \eqref{52}, we arrive at \eqref{t10}. 
\end{proof} 

Before giving the proof of Proposition \ref{17}, it remains to estimate the second integral on the LHS of \eqref{-41}:

\begin{lem}
	Under the same hypotheses as Proposition \ref{17},
	\begin{align}\label{13}
	- \frac{1}{q}\int_{B_{R+2\rho}}(\Delta u + \alpha)^q \eta \nabla_j \nabla_i F^{ij} &  \geq  - \frac{1}{q}\int_{B_{R+2\rho}}\eta(\Delta u + \alpha)^q\frac{\partial A_{ij}}{\partial \xi_l}[u]\nabla^i\nabla^j \nabla_l u \nonumber \\
	& \quad - C\rho^{-2}\int_{B_{R+2\rho}}(\Delta u + \alpha)^{q+2}.
	\end{align}
\end{lem}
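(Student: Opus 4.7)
The plan is to compute $\nabla_j\nabla_i F^{ij}$ directly from the identity $F^{ij} = (\Delta u - \operatorname{tr}(A[u]))\delta^{ij} - (\nabla^i\nabla^j u - A[u]^{ij})$, showing that the fourth-order derivatives of $u$ cancel modulo curvature, that one of the surviving third-order contributions reproduces exactly the main term on the RHS of \eqref{13}, and that the remaining third-order contribution can be integrated by parts to yield only an error of order $\rho^{-2}\int(\Delta u+\alpha)^{q+2}$.

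First I would take two covariant derivatives of $F^{ij}$, obtaining
\begin{equation*}
\nabla_j\nabla_i F^{ij} = \Delta\Delta u - \Delta\operatorname{tr}(A[u]) - \nabla_j\nabla_i\nabla^i\nabla^j u + \nabla_j\nabla_i A[u]^{ij}.
\end{equation*}
Commuting covariant derivatives twice on $(M^n,g_0)$ and using the symmetry of the Hessian of $u$ gives $\nabla_j\nabla_i\nabla^i\nabla^j u = \Delta\Delta u + E_1$, where $E_1$ consists of contractions of the Riemann tensor with at most $\nabla^2 u$ and $\nabla u$, and is therefore bounded by $C(\Delta u + \alpha)$. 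So the two fourth-order terms cancel up to an error of order $(\Delta u+\alpha)$.

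Next I would apply the chain-rule identity \eqref{20} and differentiate once more. The only pieces that contain third derivatives of $u$ are those where the outer $\nabla$ hits the $\partial_{\xi_l} A$ factor multiplied by $\nabla_\cdot\nabla_l u$, producing
\begin{align*}
\Delta\operatorname{tr}(A[u]) &= \frac{\partial\operatorname{tr}(A)}{\partial\xi_l}[u]\,\nabla_l\Delta u + O\big((\Delta u+\alpha)^2\big), \\
\nabla_j\nabla_i A[u]^{ij} &= \frac{\partial A_{ij}}{\partial\xi_l}[u]\,\nabla^j\nabla^i\nabla_l u + O\big((\Delta u+\alpha)^2\big),
\end{align*}
where the remainders are at worst quadratic in $\nabla^2 u$ and hence controlled by $C(\Delta u+\alpha)^2$ since $|\nabla^2 u|\leq C(\Delta u+\alpha)$. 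Combining everything yields
\begin{equation*}
\nabla_j\nabla_i F^{ij} = -\frac{\partial\operatorname{tr}(A)}{\partial\xi_l}[u]\,\nabla_l\Delta u + \frac{\partial A_{ij}}{\partial\xi_l}[u]\,\nabla^j\nabla^i\nabla_l u + O\big((\Delta u+\alpha)^2\big).
\end{equation*}

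Multiplying by $-\frac{1}{q}\eta(\Delta u+\alpha)^q$ and integrating, the second term reproduces the main integral on the RHS of \eqref{13} exactly, and the $O$-term contributes at most $C\rho^{-2}\int_{B_{R+2\rho}}(\Delta u+\alpha)^{q+2}$ (in fact without a $\rho^{-2}$). It remains to absorb the $\partial\operatorname{tr}(A)/\partial\xi_l$ term: rewriting $(\Delta u+\alpha)^q\nabla_l\Delta u = \frac{1}{q+1}\nabla_l(\Delta u+\alpha)^{q+1}$ and integrating by parts against $\eta\,\partial_{\xi_l}\operatorname{tr}(A)[u]$, the derivative $\nabla_l$ falls either on $\eta$ (producing a factor $\leq C\rho^{-1}$) or on $\partial_{\xi_l}\operatorname{tr}(A)[u]$ (producing a factor $\leq C(\Delta u+\alpha)$ via the chain rule and $|\nabla^2 u|\leq C(\Delta u+\alpha)$). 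The resulting terms are bounded by $C\rho^{-1}\int(\Delta u+\alpha)^{q+1} + C\int(\Delta u+\alpha)^{q+2}$; since $\Delta u+\alpha\geq 1$ and $\rho\leq R/2$ so $\rho^{-1}\leq (R/2)\rho^{-2}$, both are absorbed into $C\rho^{-2}\int(\Delta u+\alpha)^{q+2}$, completing the proof. The main obstacle is really just the bookkeeping: one must verify that none of the chain-rule or commutator remainders secretly contains a factor of $q$, since the inequality will later be iterated in $q$.
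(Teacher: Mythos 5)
Your argument is correct and follows essentially the same route as the paper: you compute $\nabla_j\nabla_i F^{ij}$ from $F^{ij}=\operatorname{tr}(W[u])\delta^{ij}-W[u]^{ij}$, cancel the fourth-order terms $\Delta\Delta u$ and $\nabla_j\nabla_i\nabla^i\nabla^j u$ up to commutators bounded by $C(\Delta u+\alpha)$, extract the third-order terms $\tfrac{\partial\operatorname{tr}(A)}{\partial\xi_l}\nabla_l\Delta u$ and $\tfrac{\partial A_{ij}}{\partial\xi_l}\nabla^i\nabla^j\nabla_l u$ by the chain rule with $O((\Delta u+\alpha)^2)$ remainders, keep the latter as the main term, and dispose of the former by writing $(\Delta u+\alpha)^q\nabla_l\Delta u=\tfrac1{q+1}\nabla_l(\Delta u+\alpha)^{q+1}$ and integrating by parts — exactly as in the paper's derivation of \eqref{64}–\eqref{65} and the subsequent step. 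Your closing remark that one must check no hidden factor of $q$ appears in the remainders is a correct and relevant observation for the later iteration.
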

\begin{proof}
	By \eqref{-47}, we see
	\begin{align*}
	\nabla_j\nabla_i F^{ij} & = \nabla_j\big(\nabla^j\Delta u - \nabla_i\nabla^i\nabla^j u - \nabla^j\operatorname{tr}(A[u]) + \nabla_i A[u]^{ij} \big) \nonumber \\
	& = \Delta^2 u - \nabla_j\nabla_i\nabla^i\nabla^j u - \Delta \operatorname{tr}(A[u]) + \nabla_j\nabla_i A[u]^{ij} \nonumber \\
	& \leq C(\Delta u + \alpha) - \Delta \operatorname{tr}(A[u]) + \nabla_j\nabla_i A[u]^{ij}
	\end{align*}
	where we have commuted derivatives to assert $|\Delta^2 u - \nabla_j\nabla_i\nabla^i\nabla^j u|\leq C|\nabla^2 u| \leq C(\Delta u +\alpha)$. It follows that 
	\begin{align}\label{64}
	- \frac{1}{q}\int_{B_{R+2\rho}}(\Delta u + \alpha)^q \eta \nabla_j \nabla_i F^{ij} & \geq  \frac{1}{q}\int_{B_{R+2\rho}}\eta(\Delta u + \alpha)^q\Delta\operatorname{tr}(A[u]) \nonumber \\
	& \quad - \frac{1}{q}\int_{B_{R+2\rho}}\eta(\Delta u + \alpha)^q\nabla_j\nabla_i A[u]^{ij} \nonumber \\
	& \quad -C\rho^{-2}\int_{B_{R+2\rho}}(\Delta u + \alpha)^{q+1}. 
	\end{align}
	Now, similarly to \eqref{51}, we have 
	\begin{align}
	\Delta \operatorname{tr}(A[u]) \geq - C(\Delta u+\alpha)^2 + \frac{\partial \operatorname{tr}(A)}{\partial\xi_l}[u]\nabla_l\Delta u \nonumber 
	\end{align}
	and
	\begin{align}
	-\nabla_j\nabla_iA[u]^{ij} \geq -C(\Delta u + \alpha)^2 - \frac{\partial A_{ij}}{\partial \xi_l}[u]\nabla^i\nabla^j \nabla_l u. \nonumber
	\end{align}
	Substituting these two inequalities into \eqref{64} yields
	\begin{align}\label{65}
	- \frac{1}{q}\int_{B_{R+2\rho}}(\Delta u + \alpha)^q \eta \nabla_j \nabla_i F^{ij} & \geq \frac{1}{q}\int_{B_{R+2\rho}}\eta(\Delta u + \alpha)^q\frac{\partial \operatorname{tr}(A)}{\partial\xi_l}[u]\nabla_l\Delta u \nonumber \\
	& \quad -  \frac{1}{q}\int_{B_{R+2\rho}}\eta(\Delta u + \alpha)^q\frac{\partial A_{ij}}{\partial \xi_l}[u]\nabla^i\nabla^j \nabla_l u \nonumber \\
	& \quad - C\rho^{-2}\int_{B_{R+2\rho}}(\Delta u + \alpha)^{q+2}.
	\end{align}
	The desired inequality \eqref{13} then follows after writing the first term on the RHS of \eqref{65} as
	\begin{equation*}
	\frac{1}{q(q+1)}\int_{B_{R+2\rho}}\eta \frac{\partial\operatorname{tr}(A)}{\partial\xi_l}[u]\nabla_l(\Delta u + \alpha)^{q+1}
	\end{equation*}
	and integrating by parts to observe that this term is bounded from below by \linebreak  $-C\rho^{-2}\int_{B_{R+2\rho}}(\Delta u + \alpha)^{q+2}$. 
	\end{proof}

We now give the proof of Proposition \ref{17}:

\begin{proof}[Proof of Proposition \ref{17}]
	Substituting the estimates \eqref{n14}, \eqref{t10} and \eqref{13} back into \eqref{-41}, we obtain 
	\begin{align}\label{18}
	\frac{C}{q-1}& \int_{B_{R+2\rho}}\eta B[u]^2\Big|\nabla(\Delta u+\alpha)^{(q-1)/2}\Big|^2 -  \int_{B_{R+2\rho}}\eta(\Delta u + \alpha)^{q-1}\nabla^i\nabla^j u\frac{\partial A_{ij}}{\partial\xi_a}[u]\nabla_a\Delta u \nonumber \\
	& - \frac{1}{q}\int_{B_{R+2\rho}}\eta(\Delta u + \alpha)^q\frac{\partial A_{ij}}{\partial \xi_a}[u]\nabla^i\nabla^j \nabla_a u \leq C\rho^{-2}\int_{B_{R+2\rho}}(\Delta u + \alpha)^{q+2}. 
	\end{align}
	But 
	\begin{align}\label{75}
 - & \int_{B_{R+2\rho}}\eta(\Delta u + \alpha)^{q-1}\nabla^i\nabla^j u\frac{\partial A_{ij}}{\partial\xi_a}[u]\nabla_a\Delta u  \nonumber \\
 & = -\frac{1}{q}\int_{B_{R+2\rho}}\eta\nabla^i\nabla^j u\frac{\partial A_{ij}}{\partial\xi_a}[u]\nabla_a(\Delta u + \alpha)^q \nonumber \\
 & = \frac{1}{q}\int_{B_{R+2\rho}}\eta(\Delta u + \alpha)^q\frac{\partial A_{ij}}{\partial \xi_a}[u]\nabla^i\nabla^j \nabla_a u + \frac{1}{q}\int_{B_{R+2\rho}}(\Delta u + \alpha)^q\nabla^i\nabla^j u \nabla_a\bigg(\eta \frac{\partial A_{ij}}{\partial\xi_a}[u]\bigg) \nonumber \\
 & \geq \frac{1}{q}\int_{B_{R+2\rho}}\eta(\Delta u + \alpha)^q\frac{\partial A_{ij}}{\partial \xi_a}[u]\nabla^i\nabla^j \nabla_a u - C\rho^{-2}\int_{B_{R+2\rho}}(\Delta u + \alpha)^{q+2}.
	\end{align}
	After substituting \eqref{75} back into \eqref{18} and observing cancellation with the third term in \eqref{18}, we obtain the estimate 
	\begin{align}\label{83}
	\frac{C}{q-1}& \int_{B_{R+2\rho}}\eta B[u]^2\Big|\nabla(\Delta u+\alpha)^{(q-1)/2}\Big|^2  \leq C\rho^{-2}\int_{B_{R+2\rho}}(\Delta u + \alpha)^{q+2}. 
	\end{align}
 The desired estimate \eqref{17} then follows after absorbing $\inf_{B_{R+2\rho}}B[u]^2$ into the constant on the LHS of \eqref{83} and using properties of $\eta$.  
\end{proof}

\subsection{Integrability improvement and Moser iteration}\label{t50}

~\medskip 

\noindent In this section we complete the proof of Theorem \ref{B}. 

\begin{proof}[Proof of Theorem \ref{B}]
First observe that by the Sobolev inequality applied to the function $(\Delta u + \alpha)^{(q-1)/2}$, we have
	\begin{align}\label{19}
	\bigg(\int_{B_{R+\rho}}(\Delta u+\alpha)^{\beta(q-1)}\bigg)^{1/\beta} & \leq C\int_{B_{R+\rho}}\big|\nabla(\Delta u+\alpha)^{(q-1)/2}\big|^2 + C\int_{B_{R+\rho}}(\Delta u+\alpha)^{q-1},
	\end{align}
	where $\beta = \frac{n}{n-2}$ if $n\geq 3$ and $\beta>1$ is any finite number if $n=2$. Note that the constants $C$ in \eqref{19} depend on the choice of $\beta$ when $n=2$, but $\beta$ will ultimately be fixed. Substituting the estimate \eqref{17} from Proposition \ref{17} into the RHS of \eqref{19}, it follows that
	\begin{equation}\label{t51}
	\bigg(\int_{B_{R+\rho}}(\Delta u+\alpha)^{s\beta}\bigg)^{1/\beta} \leq \frac{Cs}{\rho^2}\int_{B_{R+2\rho}}(\Delta u+\alpha)^{s+3}
	\end{equation}
	where $s=q-1$. 
	
Let us first address the case $n\geq 3$. It is clear that \eqref{t51} yields an improvement in integrability whenever $s+3>\frac{3n}{2}$. Under this assumption, it remains to iterate this improvement in integrability to get the pointwise estimate. Let $p>\frac{3n}{2}$ be as in the statement of Theorem \ref{B}, and define the sequence $s_j$ inductively by 
	\begin{equation*}
	s_0 = p - 3 \quad\text{and}\quad s_j = \beta s_{j-1} - 3 \text{ for }j\geq 1. 
	\end{equation*}
	Then $s_j = \beta^j s_0 - 3(\beta^{j-1}+\dots+\beta+1) = \beta^j s_0 - 3(1-\beta^j)/(1-\beta)$ and hence
	\begin{align}\label{79}
	\frac{s_j}{\beta^j} = s_0 - \frac{3(1-\beta^{-j})}{\beta-1}\rightarrow s_0 - \frac{3}{\beta-1}>0\text{ as }j\rightarrow\infty,
	\end{align}
	where we have used the definition of $\beta$ and the fact that $s_0 = p - 3 > \frac{3n}{2}-3$ to assert positivity in \eqref{79}. It follows from \eqref{79} that $s_j\rightarrow \infty$ as $j\rightarrow \infty$. We now apply \eqref{t51} iteratively with $s=s_j$ and $\rho=2^{-j-1}R$ to obtain 
	\begin{align}\label{84}
	\bigg(\int_{B_{(1+2^{-j-1})R}} (\Delta u +\alpha)^{\beta s_j}\bigg)^{\beta^{-j-1}} & \leq \bigg(4^j C s_j \int_{B_{(1+2^{-j})R}}(\Delta u + \alpha)^{\beta s_{j-1}}\bigg)^{\beta^{-j}} \nonumber \\
	& \leq \prod_{i=0}^j \big((4\beta)^iC\big)^{\beta^{-i}}\int_{B_{2R}}(\Delta u + \alpha)^p \nonumber \\
	& \leq (4\beta)^{\sum_{i=0}^\infty i\beta^{-i}}C^{\sum_{i=0}^\infty \beta^{-i}}\int_{B_{2R}}(\Delta u + \alpha)^p
	\end{align}
	we have used \eqref{79} in obtaining the second inequality. Letting $j\rightarrow\infty$ in \eqref{84} and again using \eqref{79}, we arrive at the estimate
	\begin{align*}
	\|\Delta u + \alpha\|_{L^\infty(B_R)} \leq C\bigg(\int_{B_{2R}}(\Delta u + \alpha)^p\bigg)^{(s_0 - \frac{3}{\beta-1})^{-1}}.
	\end{align*}
	As explained in Remark \ref{63}, the desired Hessian bound then follows. 
	
	Finally, if $n=2$, then we see that \eqref{t51} yields an improvement in integrability for any given $s>0$ if $\beta$ is chosen such that $\beta>\frac{s+3}{s}$. After such a choice of $\beta$ is made, the Moser iteration procedure may be followed as above. 
\end{proof}

\begin{rmk}\label{72}
We note that the method for proving Theorem \ref{B} can be extended to deal with the case $(f,\Gamma)= ((\sigma_2^{1/2})^\tau,(\Gamma_2^+)^\tau)$ for $\tau<1$, but still under the assumption $p>3n/2$. Since this yields no improvement on Theorem \ref{A} in any dimension, we omit the proof.
\end{rmk}

\bibliography{references}{}
\bibliographystyle{siam}

\end{document}